\documentclass[reqno,11pt]{amsart}

\usepackage{xcolor}
\usepackage{graphicx}
\usepackage[hidelinks]{hyperref}
\usepackage{amscd}
\usepackage{amsmath}
\usepackage{amsthm}
\usepackage{amssymb}
\usepackage{latexsym,array}
\usepackage{amsfonts}
\usepackage{shadow}
\usepackage{amsbsy}
\usepackage{amssymb}
\usepackage{physics}
\usepackage{dsfont}
\usepackage{mathtools}
\usepackage{enumitem}
\usepackage{doi}

\usepackage[notref, notcite, final]{showkeys}
\usepackage{color}
\usepackage{graphicx}
\usepackage[hidelinks]{hyperref}
\usepackage{cleveref}
\usepackage{fullpage}
\usepackage{comment}
\usepackage{tikz-cd}
\usepackage{tikz}
\usepackage{float}

\usepackage{dsfont}

\usepackage{cleveref}

\numberwithin{equation}{section}

\newtheorem{theorem}{Theorem}[section]

\newtheorem{corollary}[theorem]{Corollary}
\newtheorem{proposition}[theorem]{Proposition}

\theoremstyle{definition}
\newtheorem{remark}[theorem]{{\bf Remark}}
\newtheorem{definition}[theorem]{Definition}

\newcommand{\R}{\mathbb{R}}

\newcommand{\boundOP}{\mathcal{B}}

\newcommand{\ov}{\overline}

\DeclareMathOperator\dom{dom}

\title[]{Slice hyperholomorphicity of the $S$-resolvent operators and boundary conditions}

\author[F. Mantovani]{Francesco Mantovani*}
\address{(FM) Politecnico di Milano, Dipartimento di Matematica, Via E. Bonardi, 9, 20133 Milano, Italy}
\email{francesco.mantovani@polimi.it}
\thanks{*Corresponding author}

\date{}

\begin{document}
	
	\maketitle
	
\begin{abstract}
The foundation of spectral theory on the $S$-spectrum can be traced back
to the quaternionic framework of quantum mechanics.
The concept of $S$-spectrum for quaternionic operators emerged as the natural spectrum
in slice hyperholomorphic functional calculi, known as the $S$-functional calculus
and also utilized in the quaternionic spectral theorem.
This spectral theory extends to Clifford operators.
A key distinction from classical complex spectral theory lies in the definition of the $S$-spectrum,
 which is second order in the operator $T$, and in the $S$-resolvent operators that turns out
 to be the product of two different operators.
 This study delves into the analyticity of the $S$-resolvent operators
 under specified boundary conditions for the $S$-spectral problem.
 The spectral theory on the $S$-spectrum also provides deeper insights into classical spectral theory.
\end{abstract}

\medskip
\noindent AMS Classification: 47A10, 47A60

\noindent Keywords: $S$-resolvent operators, slice hyperholomorphic functions, boundary conditions.

\tableofcontents

\section{Introduction}\label{INTRO}

Vector operators appear in several scientific contexts and can be interpreted as Clifford right-linear operators on Banach modules, with the following identification
\begin{equation*}
    T = (T_0,\dots,T_{\abs{\mathcal{A}}}) \longleftrightarrow T = \sum_{A \in \mathcal{A}} e_A T_A
\end{equation*}
where $e_A$ runs over all the possible ordered products of the imaginary units $e_1,\dots,e_n$ which generate the Clifford algebra $\R_n$. \\
In a Banach module $V$ over the Clifford algebra $\R_n$, the spectral theory for a right-linear operator $T: \dom(T) \subseteq V \to V$ differs from the classic complex theory because the spectral parameter $s \in \R^{n+1}$ does not commute with the operator $T$, hence the notion of spectrum is no longer associated with the invertibility of the operator $T-s$, but involves the invertibility of the second order operator
\begin{equation*}
    Q_s[T]:= T^2-2s_0T+\abs{s}^2 I,\quad \dom(Q_s[T])=\dom(T^2).
\end{equation*}
This leads to the definition of $S$-resolvent set and of the $S$-spectrum:
\begin{equation*}
    \rho_S(T) = \{ s \in \R^{n+1}\,:\,Q_s[T]^{-1}\in \boundOP(V)\}\quad\text{and}\quad \sigma_S(T)=\R^{n+1}\setminus\rho_S(T),
\end{equation*}
where $\boundOP(V)$ denotes the set of bounded right-linear operators.
In the introduction of the book \cite{CGK} one finds a description on the discovery of the $S$-spectrum.

 For every $s \in \rho_S(T)$, the inverse $Q_s[T]^{-1}$ is called the pseudo-resolvent operator of $T$ at $s$. However, the second main difference with the complex case is that the operator valued function
\begin{equation*}
    \rho_S(T) \ni s \longmapsto Q_s[T]^{-1}\in \boundOP(V),
\end{equation*}
is not slice hyperholomorphic (namely the notion of holomorphicity which is needed in this noncommutative setting), but the role of the resolvent operator is played by the left and the right $S$-resolvent operators
\begin{equation*}
    S^{-1}_L(s,T)=Q_s[T]^{-1}\ov{s}-TQ_{s}[T]^{-1}\quad \text{and}\quad S^{-1}_R(s,T)=(\ov{s}-T)Q_{s}[T]^{-1},\quad  s \in \rho_S(T),
\end{equation*}
which are, respectively, a right and a left slice hyperholomorphic function on $\rho_S(T)$. These resolvent operators are then used in the definition of the $S$-functional calculus for Clifford operators, and other functional calculi based on the concept of slice hyperholomorphicity. Other details about the spectral theory on the $S$-spectrum can be found in \cite{ACS2016,AlpayColSab2020,FJBOOK,CGK,ColomboSabadiniStruppa2011}, where one can find also the development of the theory of slice hyperholomorphic functions.

\medskip

Although our theory is valid in Banach modules, we restrict our discussion of boundary conditions to the Hilbert modules $H^1(\Omega)$, as explicit results are available in the literature.
One of the main examples of Clifford operator is the so-called gradient operator with non-constant coefficients
\begin{equation*}
    T = \sum_{i=1}^n e_i a_i(x) \frac{\partial}{\partial x_i}, \quad \dom(T) = H^1(\Omega) \subset L^2(\Omega)
\end{equation*}
where the coefficients $a_1,\dots,a_n$ are defined on some minimally smooth domain $\Omega\subseteq\mathbb{R}^n$, in $n\geq 3$ dimensions. This operator represents various physical laws, such as Fourier's law for heat propagation and Fick's law for mass transfer diffusion and was studied in \cite{CMS24} under various boundary conditions. 
Since the notion of spectrum is associated with the invertibility of the second order operator $Q_s[T]$, depending on the spectral parameter $s$, it is natural to consider
the spectral problem 
\begin{equation*}
    \begin{cases}
        T^2u-2s_0Tu+\abs{s}^2u = f;\\
        u\text{ satisfies }(B);
    \end{cases}
\end{equation*}
where $(B)$ represents some boundary conditions, such as Dirichlet or Robin-like, 
considering for example $u \in \dom(T^2)$ and $f \in L^2(\Omega)$. However, if we imposed the boundary conditions on the level of the domain of the first-order differential operator, for example
\begin{equation*}
    \dom(T) = \{ u \in H^1(\Omega)\,:\, u\text{ satisfies }(B)\},
\end{equation*}
then we would get
\begin{align*}
    \dom(T^2)&= \{u \in \dom(T) \,:\, Tu \in \dom(T)\}
    \\
    &= \{ u \in H^1(\Omega)\,:\, Tu \in H^1(\Omega)\text{ and both $Tu$ and $u$ satisfy  }(B) \},
\end{align*}
and the condition $s \in \rho_S(T)$ would correspond to the less natural problem
\begin{equation*}
    \begin{cases}
        T^2u-2s_0Tu+\abs{s}^2u = f;\\
        u\text{ satisfies }(B);\\
         Tu\text{ satisfies }(B).
    \end{cases}
\end{equation*}

 In this paper we explore the effects on the notion of $S$-spectrum if we detach the operator $T$ and the pseudo-resolvent $Q_s[T]$, namely if we study the invertibility of $Q_s[T]$ not over the domain $\dom(T^2)$, which naturally comes from the definition of $T$, but only over a subspace of it. This would allow to include in the domain of $T$ only the conditions which concern the regularity of the functions, while we study the invertibility of $Q_s[T]$ with some extra conditions which regard the values of the functions on the boundary of the domain $\Omega$. More precisely, given a closed operator $T$ with domain $\dom(T)$, we introduce the subspace $B \subseteq \dom(T)$, which represents the subspace of the functions which satisfy the given boundary conditions, and define $Q_{s,B}[T]$ as the restriction of $Q_{s}[T]$ to the intersection of $\dom(T^2)$ and $B$. The $S$-spectrum of $T$ with boundary conditions $B$ will be the set of paravectors $s \in \R^{n+1}$ such that $Q_{s,B}[T]$ has not a bounded inverse. Using this new definition, we find a notion of spectrum which shares some properties with the classic one, except the holomorphicity of the associated resolvent operators. Indeed, we will see that the holomorphicity of the $S$-resolvent operators is connected with the possibility of commuting $T$ and the inverse of $Q_{s,B}[T]$. This is always possible when we work without boundary conditions (that is, $B=\dom(T)$), but when $B$ is not trivial, then the problem of identifying the points where these operators commute becomes more complicated. The main result that we will be able to prove is that, if $s\notin \R$, the vectors $v\in V$ where the Cauchy-Riemann equations for the $S$-resolvent operators hold are exactly the ones such that
 \begin{equation*}
     TQ_{s,B}[T]^{-1} Q_{s,B}[T]^{-1}v = Q_{s,B}[T]^{-1}TQ_{s,B}[T]^{-1}v,
 \end{equation*}
that is $T$ and $Q_{s,B}[T]^{-1}$ commute on $Q_{s,B}[T]^{-1}$. The situation is slightly different for real points, indeed the vectors $v \in V$ where the Cauchy-Riemann equations are satisfied are precisely the ones such that
\begin{equation*}
    T [T,Q_{s,B}[T]^{-1}]Q_{s,B}[T]^{-1}v = s[T,Q_{s,B}[T]^{-1}]Q_{s,B}[T]^{-1}v,
\end{equation*}
where
$$
[T,Q_{s,B}[T]^{-1}]:=TQ_{s,B}[T]^{-1}-Q_{s,B}[T]^{-1}T,
$$
that is, $[T,Q_{s,B}[T]]^{-1}Q_{s,B}[T]^{-1}v$ is an eigenvector (possibly zero) of $T$ with respect to the eigenvalue $s$.

The gradient operator with nonconstant coefficients is not the only one for which this notion of spectrum could be useful. Indeed, in \cite{DIRACHYPSPHE} the authors studied the Dirac operator in hyperbolic and spherical spaces with Dirichlet and Robin-like boundary conditions. More general, this theory can be applied to Dirac operators on manifolds with different boundary conditions.

\medskip
The paper is organized as follows: Section \ref{INTRO} provides the introduction. Section \ref{PREL} presents the necessary preliminaries regarding Clifford algebras and Clifford modules. In Section \ref{SBOUND}, we examine the S-spectrum with boundary conditions. Section \ref{CONTIUN} is devoted to the continuity of the S-resolvent operators with boundary conditions, while Section \ref{section:analyticity} establishes the analyticity of these operators.

\section{Preliminaries on Clifford Algebras and Clifford modules}\label{PREL}

In this section we will fix the algebraic and functional analytic setting of this paper. The underlying algebra we will consider in this article will be the real {\it Clifford algebra} $\mathbb{R}_n$ over $n$ {\it imaginary units} $e_1,\dots,e_n$, which satisfy the relations
\begin{equation*}
e_i^2=-1\qquad\text{and}\qquad e_ie_j=-e_je_i,\qquad i\neq j\in\{1,\dots,n\}.
\end{equation*}
More precisely, $\mathbb{R}_n$ is given by
\begin{equation*}
\mathbb{R}_n:=\Big\{\sum\nolimits_{A\in\mathcal{A}}s_Ae_A\;\Big|\;s_A\in\mathbb{R},\,A\in\mathcal{A}\Big\},
\end{equation*}
using the index set
\begin{equation*}
\mathcal{A}:=\big\{(i_1,\dots,i_r)\;\big|\;r\in\{0,\dots,n\},\,1\leq i_1<\dots<i_r\leq n\big\},
\end{equation*}
and the {\it basis vectors} $e_A:=e_{i_1}\dots e_{i_r}$. Note that for $A=\emptyset$ the empty product of imaginary units is the real number $e_\emptyset=1$. Furthermore, we define for every Clifford number $s\in\mathbb{R}_n$ its {\it conjugate} and its {\it absolute value}
\begin{equation}\label{Eq_Conjugate_Norm}
\overline{s}:=\sum\nolimits_{A\in\mathcal{A}}(-1)^{\frac{|A|(|A|+1)}{2}}s_Ae_A\qquad\text{and}\qquad|s|^2:=\sum\nolimits_{A\in\mathcal{A}}s_A^2.
\end{equation}
An important subset of the Clifford numbers are the so called {\it paravectors}
\begin{equation*}
\mathbb{R}^{n+1}:=\big\{s_0+s_1e_1+\dots+s_ne_n\;\big|\;s_0,s_1,\dots,s_n\in\mathbb{R}\big\}.
\end{equation*}
For any paravector $s\in\mathbb{R}^{n+1}$, we define the {\it imaginary part}
\begin{equation*}
\Im(s):=s_1e_1+\dots+s_ne_n,
\end{equation*}
and the conjugate and the modulus in \eqref{Eq_Conjugate_Norm} reduce to
\begin{equation*}
\overline{s}=s_0-s_1e_1-\dots-s_ne_n\qquad\text{and}\qquad|s|^2=s_0^2+s_1^2+\dots+s_n^2.
\end{equation*}
The sphere of purely imaginary paravectors with modulus $1$, is defined by
\begin{equation}\label{Eq_S}
\mathbb{S}:=\big\{J\in\mathbb{R}^{n+1}\;\big|\;J_0=0,\,|J|=1\big\}.
\end{equation}
Any element $J\in\mathbb{S}$ satisfies $J^2=-1$ and hence the corresponding hyperplane
\begin{equation*}
\mathbb{C}_J:=\big\{x+Jy\;\big|\;x,y\in\mathbb{R}\big\}
\end{equation*}
is an isomorphic copy of the complex numbers. Moreover, for every paravector $s\in\mathbb{R}^{n+1}$ we consider the corresponding {\it $(n-1)$--sphere}
\begin{equation*}
[s]:=\big\{s_0+J|\Im(s)|\;\big|\;J\in\mathbb{S}\big\}.
\end{equation*}
A subset $U\subseteq\mathbb{R}^{n+1}$ is called {\it axially symmetric}, if $[s]\subseteq U$ for every $s\in U$. \medskip

Next, we introduce the notion of slice hyperholomorphic functions $f:U\rightarrow\mathbb{R}_n$, defined on an axially symmetric open set $U\subseteq\mathbb{R}^{n+1}$.

\begin{definition}[Slice hyperholomorphic functions]\label{defi_Slice_hyperholomorphic_functions}
Let $U\subseteq\mathbb{R}^{n+1}$ be open, axially symmetric and consider
\begin{equation*}
\mathcal{U}:=\big\{(x,y)\in\mathbb{R}^2\;\big|\;x+\mathbb{S}y\subseteq U\big\}.
\end{equation*}
A function $f:U\rightarrow\mathbb{R}_n$ is called {\it left} (resp. {\it right}) {\it slice hyperholomorphic}, if there exist continuously differentiable functions $f_0,f_1:\mathcal{U}\rightarrow\mathbb{R}_n$, such that for every $(x,y)\in\mathcal{U}$:

\begin{enumerate}
\item[i)] The function $f$ admits for every $J\in\mathbb{S}$ the representation
\begin{equation}\label{Eq_Holomorphic_decomposition}
f(x+Jy)=f_0(x,y)+Jf_1(x,y),\quad\Big(\text{resp.}\;f(x+Jy)=f_0(x,y)+f_1(x,y)J\Big).
\end{equation}

\item[ii)] The functions $f_0,f_1$ satisfy the {\it compatibility conditions}
\begin{equation}\label{Eq_Symmetry_condition}
f_0(x,-y)=f_0(x,y)\qquad\text{and}\qquad f_1(x,-y)=-f_1(x,y).
\end{equation}

\item[iii)] The functions $f_0,f_1$ satisfy the {\it Cauchy-Riemann equations}
\begin{equation}\label{Eq_Cauchy_Riemann_equations}
\frac{\partial}{\partial x}f_0(x,y)=\frac{\partial}{\partial y}f_1(x,y)\qquad\text{and}\qquad\frac{\partial}{\partial y}f_0(x,y)=-\frac{\partial}{\partial x}f_1(x,y).
\end{equation}
\end{enumerate}

The class of left (resp. right) slice hyperholomorphic functions on $U$ is denoted by $\mathcal{SH}_L(U)$ (resp. $\mathcal{SH}_R(U)$). In the special case that $f_0$ and $f_1$ are real valued, we call $f$ {\it intrinsic} and the space of intrinsic functions is denoted by $\mathcal{N}(U)$.
\end{definition}

Next, we turn our attention to Banach modules over $\mathbb{R}_n$. For a real Banach space $V_\mathbb{R}$ with corresponding norm $\Vert\cdot\Vert_\mathbb{R}^2=\langle\cdot,\cdot\rangle_\mathbb{R}$, we define the corresponding Banach module
\begin{equation*}
V:=\Big\{\sum\nolimits_{A\in\mathcal{A}}v_A\otimes e_A\;\Big|\;v_A\in V_\mathbb{R},\,A\in\mathcal{A}\Big\},
\end{equation*}
and equip it with the norm
\begin{equation*}
\Vert v\Vert^2:=\sum\nolimits_{A\in\mathcal{A}}\Vert v_A\Vert_\mathbb{R}^2,\qquad v\in V.
\end{equation*}
For any vector $v=\sum_{A\in\mathcal{A}}v_A\otimes e_A\in V$ and any Clifford number $s=\sum_{B\in\mathcal{A}}s_Be_B\in\mathbb{R}_n$, we establish the left and the right scalar multiplication
\begin{align*}
sv:=&\sum\nolimits_{A,B\in\mathcal{A}}(s_Bv_A)\otimes(e_Be_A), && \textit{(left-multiplication)} \\
vs:=&\sum\nolimits_{A,B\in\mathcal{A}}(v_As_B)\otimes(e_Ae_B). && \textit{(right-multiplication)}
\end{align*}
From \cite[Lemma 2.1]{CMS24} we recall the well known properties of these products
\begin{subequations}
\begin{align}
\Vert sv\Vert&\leq 2^{\frac{n}{2}}|s|\Vert v\Vert\qquad\text{and}\qquad\Vert vs\Vert\leq 2^{\frac{n}{2}}|s|\Vert v\Vert,\qquad\text{if }s\in\mathbb{R}_n, \label{Eq_Norm_estimate_Rn} \\
\Vert sv\Vert&=\Vert vs\Vert=|s|\Vert v\Vert,\hspace{4.83cm}\text{if }s\in\mathbb{R}^{n+1}. \label{Eq_Norm_estimate_paravector}
\end{align}
\end{subequations}

For any Banach module $V$ we will denote the set of {\it bounded} right-linear operators
\begin{equation*}
\mathcal{B}(V):=\big\{T:V\rightarrow V\text{ right-linear}\;\big|\;\text{$T$ is bounded}\big\},
\end{equation*}
as well as the space of {\it closed operators}
\begin{equation*}
\mathcal{K}(V):=\big\{T:V\rightarrow V\text{ right-linear}\;\big|\;\dom(T)\subseteq V\text{ is right linear},\,T\text{ is closed}\big\}.
\end{equation*}

In difference to complex Banach spaces, in Banach modules over $\R_n$ the spectrum of a closed operator $T\in\mathcal{K}(V)$ is connected to the bounded invertibility of the operator
\begin{equation*}
Q_s[T]:=T^2-2s_0T+|s|^2 I,\qquad\text{with }\dom(Q_s[T])=\dom(T^2).
\end{equation*}
This suggests the following definition of $S$-spectrum.

\begin{definition}[$S$-Spectrum]
For every $T\in\mathcal{K}(V)$ let us define the {\it $S$-resolvent set} and the {\it $S$-spectrum}
\begin{equation}\label{Eq_S_spectrum}
\rho_S(T):=\big\{s\in\mathbb{R}^{n+1}\;\big|\;Q_s[T]^{-1}\in\mathcal{B}(V)\big\}\qquad\text{and}\qquad\sigma_S(T):=\mathbb{R}^{n+1}\setminus\rho_{S}(T).
\end{equation}
Moreover, for every $s\in\rho_S(T)$ we define the {\it left} and the {\it right $S$-resolvent operator}
\begin{equation}\label{Eq_SL_SR}
S_L^{-1}(s,T):=Q_s[T]^{-1}\overline{s}-TQ_s[T]^{-1}\qquad\text{and}\qquad S_R^{-1}(s,T):=(\overline{s}-T)Q_s[T]^{-1}.
\end{equation}
\end{definition}

\section{The $S$-spectrum with boundary conditions}\label{SBOUND}

Let $V$ be a Banach module over the Clifford algebra $\R_n$. For a right-linear operator $T : \dom(T)\subseteq V \to V$ and $s \in \R^{n+1}$ we define
\[Q_{s}[T] := T^2-2 s_0 T+|s|^2 I: \dom(T^2)\subseteq V\to V.\]
We consider a right submodule $B \subseteq \dom (T)$ and define
\begin{equation*}
Q_{s,B}[T] := Q_s[T]_{|\dom_B(T^2)}:\dom_B(T^2) \subseteq V \to V
\end{equation*}
with
\[ \dom_B(T^2) :=  \dom(T^2) \cap B. \]
The following classifications on the domains associated with the $S$-resolvent operators is crucial for the
understanding of the theory on the $S$-spectrum:
\begin{enumerate}
\item[(i)]
$\dom(T)$ is the domain associated with the operator $T$,
\item[(ii)]
$\dom_B(T^2)$ is the domain associated with the $S$-spectrum.
\end{enumerate}
\begin{definition} 
We define the $S$-resolvent set with boundary conditions $B$ of $T$ as
\begin{equation*}
\rho_{S,B}(T)=\left\{s\in \R^{n+1}\ :\ Q_{s,B}[T]^{-1}\in \mathcal{B}(V)\ \right\}
\end{equation*}
and the $S$-spectrum $\sigma_{S,B}(T)$ with boundary conditions $B$  of $T$ as
$$
\sigma_{S,B}(T)=\R^{n+1}\setminus \rho_{S,B}(T).
$$
\end{definition}

\begin{definition}
Let $T$ be a closed right-linear operator and $B \subseteq \dom(T)$ be a right submodule of $V$. For $s\in\rho_{S,B}(T)$, we define the {\em left $S$-resolvent operator} as
\[
S_{L,B}^{-1}(s,T) := Q_{s,B}[T]^{-1}\overline{s}-TQ_{s,B}[T]^{-1},
\]
and the {\em right $S$-resolvent operator} as
$$
S_{R,B}^{-1}(s,T) = (\overline{s}-T)Q_{s,B}[T]^{-1}.
$$
\end{definition}

\begin{remark}
We remark the following facts:
\begin{itemize}
\item[(i)]
In the case we start with an operator $T: \dom(T)\subset V\to V$ such that $ \dom(T^2) \subseteq B \subseteq \dom (T)$, then clearly $\dom_B(T^2) = \dom(T^2)$ and $Q_s[T] = Q_{s,B}[T]$, hence the definition of $S$-resolvent and $S$-spectrum coincide with the classic ones.
\item[(ii)] Let $\Omega \subseteq \R^{n}$ be a bounded domain and consider the gradient operator with nonconstant coefficients $T = \sum_{i=1}^n e_i a_i(x) \pdv{}{x_i}$ with $\dom (T) = H^1(\Omega)$ as in \cite{CMS24}. If we set $B = H^1_0(\Omega)$, then $\dom_B(T^2) = H^2(\Omega)\cap H^1_0(\Omega)$, and finding $s \in \R^{n+1}$ is an equivalent problem to determine if for every $f \in L^2(\Omega)$ the boundary value problem
\begin{equation*}
    \begin{cases}
        T^2u-2s_0Tu + \abs{s}^2u = f\quad \text{on $\Omega$,}\\
        \tr (u) = 0 \quad \text{on $\partial\Omega$},
    \end{cases}
\end{equation*}
has a unique solution in $\dom(T^2)$ and
\begin{equation*}
    f \longmapsto u:=(T^2-2s_0T + \abs{s}^2 I)^{-1}f
\end{equation*}
is a bounded operator.
\end{itemize}

\end{remark}

A relevant difference with the classic theory of the $S$-spectrum is that, in general, $T$ and $Q_{s,B}[T]^{-1}$ do not commute on $\dom(T)$ for $s \in \rho_{S,B}(T)$. Indeed, since
\begin{equation}
\label{eq:commutativity T and Q_s}
    T Q_s[T] = Q_s[T] T\quad \text{on $\dom(T^3)$,}
\end{equation}
then, using the classic notion of $S$-resolvent, for $s \in \rho_{S}(T)$ we have
\begin{equation*}
    T Q_s[T]^{-1} = Q_s[T]^{-1} T \quad\text{on $\dom(T)$.}
\end{equation*}
However, if $s \in \rho_{S,B}(T)$ and $\dom(T^2) \not\subset  B$,  it is not true that $T$ and $Q_{s,B}[T]^{-1}$ commute on $\dom(T)$ and we will see in Section~\ref{section:analyticity} how this changes the study of the analyticity of the $S$-resolvent operators. In the next proposition, we analyze when this commutative property holds.

\begin{proposition}
\label{prop:commutativity Q_s,b and T}
    Let $T:\dom(T)\subseteq V \to V$ be a right-linear operator. Let $B \subseteq \dom(T)$ be a right submodule and $s \in \rho_{S,B}(T)$. Then
    \begin{equation*}
            TQ_{s,B}[T]^{-1}w = Q_{s,B}[T]^{-1}Tw\,\text{ if and only if }\,w=Q_{s,B}[T]v\text{ for }v \in \dom(T^3)\cap B\cap T^{-1}(B).
    \end{equation*}
\end{proposition}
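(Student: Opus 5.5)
We need to read the statement carefully. Both sides must make sense, so implicitly $w\in\dom(T)$. The equality $TQ_{s,B}[T]^{-1}w = Q_{s,B}[T]^{-1}Tw$ also requires $Q_{s,B}[T]^{-1}w\in\dom(T)$. This holds automatically, since $Q_{s,B}[T]^{-1}$ maps $V$ onto $\dom_B(T^2)\subseteq\dom(T)$.

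The plan is to use that $Q_{s,B}[T]$ is a bijection from $\dom_B(T^2)$ onto $V$. Every $w\in V$ can be written uniquely as $w=Q_{s,B}[T]v$ with $v:=Q_{s,B}[T]^{-1}w\in\dom(T^2)\cap B$. We then translate the commutation identity into conditions on $v$. The key algebraic input is the commutation \eqref{eq:commutativity T and Q_s}, that is $TQ_s[T]=Q_s[T]T$ on $\dom(T^3)$, which is a pure polynomial identity in $T$.

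For the direction ($\Leftarrow$), let $v\in\dom(T^3)\cap B\cap T^{-1}(B)$ and $w=Q_{s,B}[T]v=Q_s[T]v$. Then $w\in\dom(T)$, since $v\in\dom(T^3)$. By \eqref{eq:commutativity T and Q_s}, $Tw=Q_s[T]Tv$. Now $Tv\in\dom(T^2)$ because $v\in\dom(T^3)$, and $Tv\in B$ because $v\in T^{-1}(B)$. Hence $Tv\in\dom_B(T^2)$ and $Tw=Q_{s,B}[T]Tv$. Applying $Q_{s,B}[T]^{-1}$ gives
\[
Q_{s,B}[T]^{-1}Tw=Tv=TQ_{s,B}[T]^{-1}w .
\]

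For the direction ($\Rightarrow$), let $w\in\dom(T)$ satisfy the identity and set $v=Q_{s,B}[T]^{-1}w\in\dom(T^2)\cap B$. The identity reads $Tv=Q_{s,B}[T]^{-1}Tw$. The right-hand side lies in $\dom_B(T^2)=\dom(T^2)\cap B$. Hence $Tv\in B$, that is $v\in T^{-1}(B)$, and $Tv\in\dom(T^2)$, that is $v\in\dom(T^3)$. So $v\in\dom(T^3)\cap B\cap T^{-1}(B)$ and $w=Q_{s,B}[T]v$, as required.

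There is no real obstacle. The only delicate point is domain bookkeeping: interpreting $T^{-1}(B)$ as the preimage $\{u\in\dom(T): Tu\in B\}$, and checking that $w\in\dom(T)$ on both sides. In particular, $w=Q_s[T]v=T^2v-2s_0Tv+|s|^2v$ lies in $\dom(T)$ whenever $v\in\dom(T^3)$. Closedness of $T$ is not even needed.
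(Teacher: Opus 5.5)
Your proof is correct and follows essentially the same argument as the paper. For ($\Leftarrow$) you combine $TQ_s[T]=Q_s[T]T$ on $\dom(T^3)$ with $Tv\in\dom_B(T^2)$ and invert $Q_{s,B}[T]$, and for ($\Rightarrow$) you read off $v=Q_{s,B}[T]^{-1}w\in\dom(T^3)\cap B\cap T^{-1}(B)$ from $Tv=Q_{s,B}[T]^{-1}Tw\in\dom_B(T^2)$; your added domain bookkeeping ($w\in\dom(T)$, closedness not needed) is a useful clarification the paper leaves implicit.
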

\begin{proof}
    For every $v \in \dom(T^3) \cap B \cap T^{-1}(B)$ we know that $v \in \dom_B(T^2) $ as well as $Tv \in \dom_B(T^2)$. Hence, using Equation~\eqref{eq:commutativity T and Q_s}, we find that
    \begin{equation*}
        T Q_{s,B}[T]v = T Q_s[T]v=Q_s[T]Tv = Q_{s,B}[T]Tv.
    \end{equation*}
    Now let $s \in \rho_{S,B}(T)$, if $w = Q_{s,B}[T]v$ with $v \in \dom(T^3)\cap B\cap T^{-1}(B)$, then clearly $Q_{s,B}[T]^{-1}w = v \in \dom(T^3)\cap B\cap T^{-1}(B)$, and by the argument above, we have
    \begin{equation*}
        Q_{s,B}[T]TQ_{s,B}[T]^{-1}w = TQ_{s,B}[T]Q_{s,B}[T]^{-1}w=Tw,
    \end{equation*}
    and so
    \begin{equation*}
        TQ_{s,B}[T]^{-1}w=Q_{s,B}[T]^{-1}Tw.
    \end{equation*}
    Conversely, if $TQ_{s,B}[T]^{-1}w = Q_{s,B}[T]^{-1}Tw$ holds then
    \begin{equation*}
        T\bigl(Q_{s,B}[T]^{-1}w\bigr) = Q_{s,B}[T]^{-1}\bigl(Tw\bigr) \in \dom_B(T^2)= \dom(T^2) \cap B,
    \end{equation*}
    hence $Q_{s,B}[T]^{-1}w \in T^{-1}(B) \cap \dom(T^3)$. Moreover, by definition $Q_{s,B}[T]^{-1}w \in B$. Hence $w = Q_{s,B}[T] (Q_{s,B}[T]^{-1}w)$ with $ Q_{s,B}[T]^{-1}w \in  \dom(T^3)\cap B\cap T^{-1}(B)$. This concludes the proof.
\end{proof}

\begin{remark}
    Let $(T_{|B},B)$ the operator defined by $T_{|B} x=Tx$ for all $x \in B$, that is the restriction of the operator $T$ to the subspace of the boundary conditions $B$, we have
    \[\dom(T_{|B}^2)=\{u \in \dom(T_{|B})\,:\,Tu \in \dom(T_{|B})\}=\{u \in B\,:\,Tu\in B\}=B\cap T^{-1}(B),\]
    hence the previous proposition can be rewritten as: $TQ_{s,B}[T]^{-1}w=Q_{s,B}[T]^{-1}Tw$ if and only if $w \in Q_{s,B}[T](\dom(T^3)\cap\dom(T_{|B}^2))$.
\end{remark}

Since the commutativity of $Q_{s,B}[T]^{-1}$ and $T$ will play a crucial role throughout the paper, it may be useful to introduce the notion of commutator of these operators
\begin{align*}
    [T,Q_{s,B}[T]^{-1}]:=TQ_{s,B}[T]^{-1}-Q_{s,B}[T]^{-1}T:\dom(T)\subseteq V \to V.
\end{align*}
We can now rephrase the content of Proposition~\ref{prop:commutativity Q_s,b and T} as
\begin{equation*}
    \ker [T,Q_{s,B}[T]^{-1}] =\{ Q_{s,B}[T]v\,:\,v \in \dom(T^3)\cap B \cap T^{-1}(B)\}.
\end{equation*}

\section{Continuity of the $S$-resolvent operators with boundary conditions}\label{CONTIUN}

In \cite[Section 3.1]{FJBOOK}, the authors prove the continuous differentiability of the pseudo-resolvent operator $Q_s[T]^{-1}$ and the $S$-resolvent operators as preliminary results for the slice hyperholomorphicity of the $S$-resolvent operators. However, these proofs rely on the commutativity of the pseudo-resolvent $Q_s[T]^{-1}$ and the operator $T$ on the domain $\dom(T)$. In this section we revise the proofs of these facts in order to obtain similar properties for the pseudo-resolvent and the $S$-resolvent operators when they are associated with boundary conditions.

\begin{proposition}
\label{prop:serie per Q_s,b[T]}
    Let $T:\dom(T) \subseteq V \to V $ be a closed right-linear operator. Let $B \subseteq \dom(T)$ be a right submodule and $q \in \rho_{S,B}(T)$. For $s \in \R^{n+1}$, let
    \begin{equation}
        \label{eq:series boundary}
        \mathcal{J}(s) =\sum_{n=0}^\infty   Q_{q,B}[T]^{-1}(\Lambda(q,s)Q_{q,B}[T]^{-1})^n,
    \end{equation}
    where
\begin{equation*}
    \Lambda(q,s) \coloneqq Q_{q,B}[T]-Q_{s,B}[T] = 2(s_0-q_0)T+(\abs{q}^2-\abs{s}^2)I.
\end{equation*}
   Then the series converges to $Q_{s,B}[T]^{-1}$ uniformly in $\boundOP(V)$ on any of the closed axially-symmetric neighborhoods
    \begin{equation*}
        C_{\varepsilon}(q) = \{ s \in \R^{n+1}\,:\,d_S(s,q)\le \varepsilon\}
    \end{equation*}
    of $q$ with
    \begin{equation*}
        d_{S}(s,q) = \max\{2\abs{s_0-q_0},\abs{\abs{s}^2-\abs{q}^2}\}
    \end{equation*}
    and
    \begin{equation*}
        \varepsilon < \frac{1}{\norm{T Q_{q,B}[T]^{-1}}+\norm{Q_{q,B}[T]^{-1}}}.
    \end{equation*}
    In particular, $C_\varepsilon(q) \subseteq \rho_{S,B}(T)$ and $\rho_{S,B}(T)$ is open.
\end{proposition}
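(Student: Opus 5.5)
The plan is a Neumann-series argument. The one point needing care is that we never commute $T$ with $Q_{q,B}[T]^{-1}$; we only use that $\Lambda(q,s)$ acts on the range of $Q_{q,B}[T]^{-1}$.

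First we show that $\Lambda(q,s)Q_{q,B}[T]^{-1}$ is bounded with small norm. Since $Q_{q,B}[T]^{-1}$ maps $V$ into $\dom_B(T^2)\subseteq\dom(T)$, the operator $\Lambda(q,s)Q_{q,B}[T]^{-1}=2(s_0-q_0)TQ_{q,B}[T]^{-1}+(\abs{q}^2-\abs{s}^2)Q_{q,B}[T]^{-1}$ is defined on all of $V$. The operator $TQ_{q,B}[T]^{-1}$ is bounded by the closed graph theorem, since $T$ is closed and $Q_{q,B}[T]^{-1}$ is bounded. Because $s_0-q_0$ and $\abs{q}^2-\abs{s}^2$ are real, the norm estimates give
\[
\norm{\Lambda(q,s)Q_{q,B}[T]^{-1}}\le d_S(s,q)\bigl(\norm{TQ_{q,B}[T]^{-1}}+\norm{Q_{q,B}[T]^{-1}}\bigr)\le \varepsilon\bigl(\norm{TQ_{q,B}[T]^{-1}}+\norm{Q_{q,B}[T]^{-1}}\bigr)=:\theta<1
\]
for $s\in C_\varepsilon(q)$. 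Hence the series \eqref{eq:series boundary} converges absolutely in $\boundOP(V)$, uniformly on $C_\varepsilon(q)$, because its terms are dominated by $\norm{Q_{q,B}[T]^{-1}}\theta^n$. Its sum is $\mathcal{J}(s)=Q_{q,B}[T]^{-1}(I-\Lambda(q,s)Q_{q,B}[T]^{-1})^{-1}$.

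Next we identify the sum as the inverse of $Q_{s,B}[T]$. On $\dom_B(T^2)$ we have $Q_{s,B}[T]=Q_{q,B}[T]-\Lambda(q,s)$, and $\Lambda(q,s)$ is defined there because $\dom_B(T^2)\subseteq\dom(T)$. This gives the factorization
\[
Q_{s,B}[T]Q_{q,B}[T]^{-1}=I-\Lambda(q,s)Q_{q,B}[T]^{-1}\quad\text{on }V.
\]
Multiplying on the right by $(I-\Lambda Q_{q,B}[T]^{-1})^{-1}$ shows that $\mathcal{J}(s)$ maps into $\dom_B(T^2)$ and satisfies $Q_{s,B}[T]\mathcal{J}(s)=I$. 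For injectivity, take $u\in\dom_B(T^2)$ with $Q_{s,B}[T]u=0$ and set $w=Q_{q,B}[T]u$. Then $u=Q_{q,B}[T]^{-1}w$, so the factorization gives $(I-\Lambda Q_{q,B}[T]^{-1})w=0$, hence $w=0$ and $u=0$. Therefore $Q_{s,B}[T]$ is bijective from $\dom_B(T^2)$ onto $V$, and its inverse $\mathcal{J}(s)$ is bounded.

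It follows that $C_\varepsilon(q)\subseteq\rho_{S,B}(T)$. Since $d_S(\cdot,q)$ is continuous and vanishes at $q$, the set $C_\varepsilon(q)$ contains an open ball around $q$, so $\rho_{S,B}(T)$ is open.

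The main obstacle is the bookkeeping of domains. We must use $\Lambda(q,s)Q_{q,B}[T]^{-1}$ and not $Q_{q,B}[T]^{-1}\Lambda(q,s)$: the latter would require commuting $T$ with $Q_{q,B}[T]^{-1}$, which fails in general by Proposition~\ref{prop:commutativity Q_s,b and T}. For this reason we obtain a right inverse directly and prove injectivity separately as above, rather than producing a two-sided Neumann identity.
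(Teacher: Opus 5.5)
Your proof is correct and follows the paper's argument. You bound $\norm{\Lambda(q,s)Q_{q,B}[T]^{-1}}$ by $d_S(s,q)\bigl(\norm{TQ_{q,B}[T]^{-1}}+\norm{Q_{q,B}[T]^{-1}}\bigr)<1$, invert $I-\Lambda(q,s)Q_{q,B}[T]^{-1}$ by a Neumann series, and identify $Q_{s,B}[T]^{-1}=Q_{q,B}[T]^{-1}(I-\Lambda(q,s)Q_{q,B}[T]^{-1})^{-1}$; your closed-graph justification that $TQ_{q,B}[T]^{-1}\in\boundOP(V)$ and your $M$-test for uniformity make explicit what the paper leaves implicit. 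The paper writes the factorization as $Q_{s,B}[T]=(I-\Lambda(q,s)Q_{q,B}[T]^{-1})Q_{q,B}[T]$ on $\dom_B(T^2)$, a composition of two bijections, which gives two-sided invertibility at once with no commutation; your separate injectivity step is exactly this identity, so your closing concern about obtaining a two-sided inverse does not actually arise.
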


\begin{proof}
 Let $q \in \rho_{S,B}(T)$.
 The sets $C_\varepsilon(q)$ can be proved to be closed axially-symmetric neighborhoods of $q$ as in \cite[Theorem 3.1.2]{FJBOOK}.
  Now let
  $$\varepsilon<1/(\norm{T Q_{q,B}[T]^{-1}}+\norm{Q_{q,B}[T]^{-1}})$$ and $s \in C_{\varepsilon}(q)$, we have
\begin{equation*}
    Q_{s,B}[T] = Q_{q,B}[T] - \Lambda(q,s),
\end{equation*}
and so
\begin{equation*}
    Q_{s,B}[T] = (I-\Lambda(q,s)Q_{q,B}[T]^{-1})Q_{q,B}[T].
\end{equation*}
Now note that for any $s \in C_\varepsilon(q)$ we have
 \begin{align*}
     \norm{\Lambda(q,s)Q_{q,B}[T]^{-1}} &\le 2\abs{s_0-q_0}\norm{T Q_{q,B}[T]^{-1}}+\abs{\abs{q}^2-\abs{s}^2}\norm{Q_{q,B}[T]^{-1}}\\
     &\le d_S(s,q) (\norm{T Q_{q,B}[T]^{-1}}+\norm{Q_{q,B}[T]^{-1}})\\
     &\le \varepsilon  (\norm{T Q_{q,B}[T]^{-1}}+\norm{Q_{q,B}[T]^{-1}}) <1,
 \end{align*}
which implies that $(I-\Lambda(q,s)Q_{q,B}[T]^{-1})$ is boundedly invertible, and so $Q_{s,B}[T]$ is bijective with bounded inverse
\begin{equation*}
    Q_{s,B}[T]^{-1} = Q_{q,B}[T]^{-1}(I-\Lambda(q,s)Q_{q,B}[T]^{-1})^{-1}\in\boundOP(V).
\end{equation*}
This means that $s \in \rho_{S,B}(T)$ and so $\rho_{S,B}(T)$ is open. Moreover we obtain
\begin{equation*}
    Q_{s,B}[T]^{-1} = Q_{q,B}[T]^{-1}\sum_{n=0}^\infty (\Lambda(q,s)Q_{q,B}[T]^{-1})^n = \sum_{n=0}^\infty Q_{q,B}[T]^{-1}(\Lambda(q,s)Q_{q,B}[T]^{-1})^n.
\end{equation*}
\end{proof}

\begin{proposition}
\label{prop:Q_s and TQ_s are C^1}
    Let $T:\dom(T) \subseteq V \to V$ be a closed right-linear operator. Let $B \subseteq \dom(T)$ be a right submodule and suppose $\rho_{S,B}(T) \ne \emptyset$. Then the operator-valued functions
    \begin{equation*}
        s :\rho_{S,B}(T)\longmapsto Q_{s,B}[T]^{-1} \in \boundOP(V)
    \end{equation*}
    and
    \begin{equation*}
        s :\rho_{S,B}(T)\longmapsto TQ_{s,B}[T]^{-1} \in \boundOP(V)
    \end{equation*}
    are continuously differentiable.
\end{proposition}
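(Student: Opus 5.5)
The plan is to use the series representation of Proposition~\ref{prop:serie per Q_s,b[T]} and read it as a power series in the two real variables $a=s_0-q_0$ and $b=\abs{q}^2-\abs{s}^2$. Fix $q\in\rho_{S,B}(T)$ and write $R:=Q_{q,B}[T]^{-1}$ and $K:=TQ_{q,B}[T]^{-1}$. Both are bounded: $R$ by assumption, and $K$ because $T$ is closed and $R$ is bounded and everywhere defined with range in $\dom(T)$, so the closed graph theorem applies (equivalently, $K=\tfrac{1}{2q_0}(\ldots)$ is not needed). Since $\Lambda(q,s)R=2aK+bR$, the series becomes
\begin{equation*}
Q_{s,B}[T]^{-1}=\sum_{m=0}^\infty R\,(2aK+bR)^m .
\end{equation*}
This is a convergent series of polynomials in $(a,b)$ with bounded operator coefficients. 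The coefficients need not commute, but that does not matter. Each term $R(2aK+bR)^m$ is a polynomial map $(a,b)\mapsto\boundOP(V)$. Its partial derivatives are sums of $m$ terms, each bounded by $m\,\norm{R}\,(2\abs{a}\norm{K}+\abs{b}\norm{R})^{m-1}\cdot\max(2\norm{K},\norm{R})$.

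On the neighborhood $C_\varepsilon(q)$ we have $2\abs{a}\norm{K}+\abs{b}\norm{R}\le \varepsilon(\norm{K}+\norm{R})=:\theta<1$. Hence the series of derivatives is dominated by $\sum m\theta^{m-1}<\infty$ and converges uniformly. By the standard theorem on termwise differentiation of uniformly convergent series in Banach spaces, the map $(a,b)\mapsto Q_{s,B}[T]^{-1}$ is $C^1$ (indeed real analytic) near $(0,0)$. Since $a=s_0-q_0$ and $b=\abs{q}^2-\abs{s}^2$ are smooth functions of $s\in\R^{n+1}$, the chain rule shows that $s\mapsto Q_{s,B}[T]^{-1}$ is $C^1$ on the interior of $C_\varepsilon(q)$, which is a neighborhood of $q$. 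Because $q$ was arbitrary, the map is $C^1$ on $\rho_{S,B}(T)$.

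For $TQ_{s,B}[T]^{-1}$, apply $T$ termwise: $TQ_{s,B}[T]^{-1}=\sum_m K(2aK+bR)^m$. To justify this, note that the partial sums $S_N$ of the first series lie in $\dom(T)$, that $S_N v\to Q_{s,B}[T]^{-1}v$, and that $TS_N v=\sum_{m\le N}K(\ldots)^m v$ converges. Closedness of $T$ then gives the identity. The $K$-series has the same structure with $K$ in place of the leading $R$, so the identical domination argument yields $C^1$ regularity.

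The main obstacle I expect is exactly the boundedness of $K=TQ_{q,B}[T]^{-1}$ and the exchange of $T$ with the infinite sum. Commutativity of $T$ with $Q_{q,B}[T]^{-1}$ is unavailable here (cf.\ Proposition~\ref{prop:commutativity Q_s,b and T}), so one cannot rewrite $K$ as $Q^{-1}T$. For that reason I rely only on closedness of $T$ and the closed graph theorem, and never commute the factors in the expansion.
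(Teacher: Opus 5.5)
Your proof is correct, and it takes a different route from the paper's.

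\textbf{The paper's route.} It first derives continuity of $s\mapsto Q_{s,B}[T]^{-1}$ and $s\mapsto TQ_{s,B}[T]^{-1}$ from the series. For the second map it uses the identity $TQ_{s,B}[T]^{-1}-TQ_{q,B}[T]^{-1}=TQ_{s,B}[T]^{-1}\Lambda(q,s)Q_{q,B}[T]^{-1}$ and the bound $\norm{TQ_{s,B}[T]^{-1}}\le 2\norm{TQ_{q,B}[T]^{-1}}$. It then computes the partial derivatives in the slice coordinates $s=x+Jy$ directly from difference quotients, via $Q_{s,B}[T]^{-1}-Q_{s',B}[T]^{-1}=Q_{s,B}[T]^{-1}(Q_{s',B}[T]-Q_{s,B}[T])Q_{s',B}[T]^{-1}$. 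This needs the continuity just proved, and gives the closed formulas $\partial_x Q_{s,B}[T]^{-1}=2Q_{s,B}[T]^{-1}(T-x)Q_{s,B}[T]^{-1}$, $\partial_y Q_{s,B}[T]^{-1}=-2yQ_{s,B}[T]^{-2}$, and their analogues for $TQ_{s,B}[T]^{-1}$. Finally it passes to the coordinates $s_0,\dots,s_n$ through $y=\abs{\Im(s)}$. That map is not smooth on the real axis, so real points need a separate treatment.

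\textbf{Your route.} You regard $Q_{s,B}[T]^{-1}$ as a norm-convergent power series, with noncommuting bounded coefficients, in the two polynomial quantities $s_0$ and $\abs{s}^2$. Termwise differentiation plus the chain rule then give $C^1$ (indeed real analyticity) in one step, with no special case at $s\in\R$, and continuity comes for free. You also justify two points the paper takes for granted: the boundedness of $TQ_{q,B}[T]^{-1}$, via the closed graph theorem, and the interchange of $T$ with the series, via closedness of $T$ applied to the partial sums.

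\textbf{What each buys.} The paper's computation yields explicitly the derivative formulas used immediately afterwards in Proposition~\ref{prop:S resolvent are slice} and in the Cauchy--Riemann analysis. You can recover them from your series by evaluating the termwise derivatives at $(a,b)=(0,0)$, where $\partial_a=2RK$ and $\partial_b=R^2$. This gives $\partial_{s_0}Q_{s,B}[T]^{-1}=2Q_{s,B}[T]^{-1}TQ_{s,B}[T]^{-1}-2s_0Q_{s,B}[T]^{-2}$ and $\partial_{s_i}Q_{s,B}[T]^{-1}=-2s_iQ_{s,B}[T]^{-2}$, in agreement with the paper.

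\textbf{Minor point.} The parenthetical remark about $K=\tfrac{1}{2q_0}(\ldots)$ is garbled and should simply be dropped.
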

\begin{proof}
   Let $q \in \rho_{S,B}(T)$, according to Proposition~\ref{prop:serie per Q_s,b[T]} it is possible to write in a neighborhood of $q$
\begin{align*}
    Q_{s,B}[T]^{-1} &=\sum_{n=0}^\infty Q_{q,B}[T]^{-1}(\Lambda(q,s)Q_{q,B}[T]^{-1})^n\\
    &= Q_{q,B}[T]^{-1} + \sum_{n=1}^\infty Q_{q,B}[T]^{-1}(\Lambda(q,s)Q_{q,B}[T]^{-1})^n
\end{align*}
Moreover the proof of Proposition~\ref{prop:serie per Q_s,b[T]} shows that
\begin{equation*}
    \norm{\Lambda(q,s)Q_{q,B}[T]^{-1}} \le d_S(s,q) (\norm{T Q_{q,B}[T]^{-1}}+\norm{Q_{q,B}[T]^{-1}})\longrightarrow 0 \text{ as $s \to q$,}
\end{equation*}
hence, since the series converges uniformly, we deduce that
$$Q_{s,B}[T]^{-1} \to Q_{q,B}[T]^{-1}\ \ {\rm in}\  \ \boundOP(V).$$
Now let $s \in C_{\varepsilon}(q)$ with
\begin{equation*}
    \varepsilon=\frac{1}{2\bigl(\norm{TQ_{q,B}[T]^{-1}}+\norm{Q_{q,B}[T]^{-1}}\bigr)}.
\end{equation*}
Since $T$ is closed, we can apply $T$ to the series expansion of $Q_{s,B}[T]$ and find
\begin{align*}
    T Q_{s,B}[T]^{-1} = TQ_{q,B}[T]^{-1} \sum_{n=0}^\infty(\Lambda(q,s)Q_{q,B}[T]^{-1})^n,
\end{align*}
moreover, as before, we have
\begin{align*}
  \norm{\Lambda(q,s)Q_{q,B}[T]^{-1}} &\le d(s,q)\bigl(\norm{TQ_{q,B}[T]^{-1}}+\norm{Q_{q,B}[T]^{-1}}\bigr)  \\
  &\le \varepsilon \bigl(\norm{TQ_{q,B}[T]^{-1}}+\norm{Q_{q,B}[T]^{-1}}\bigr) < \frac{1}{2},
\end{align*}
and so
\begin{align*}
    \norm{T Q_{s,B}[T]^{-1}} & \le \norm{TQ_{q,B}[T]^{-1}} \sum_{n=0}^\infty\norm{\Lambda(q,s)Q_{q,B}[T]^{-1}}^n \\
    &\le \norm{TQ_{q,B}[T]^{-1}} \sum_{n=0}^\infty \frac{1}{2^n} = 2 \norm{TQ_{q,B}[T]^{-1}}.
\end{align*}
Now observe that
\begin{align*}
    T Q_{s,B}[T]^{-1}- TQ_{q,B}[T]^{-1} &= T Q_{s,B}[T]^{-1}(I-Q_{s,B}[T]Q_{q,B}[T]^{-1}) \\
    &= TQ_{s,B}[T]^{-1} (Q_{q,B}[T]-Q_{s,B}[T])Q_{q,B}[T]^{-1}\\
    &= TQ_{s,B}[T]^{-1}\Lambda(q,s)Q_{q,B}[T]^{-1},
\end{align*}
hence we find
\begin{align*}
    \norm{T Q_{s,B}[T]^{-1}- TQ_{q,B}[T]^{-1}} &\le \norm{TQ_{s,B}[T]^{-1}}\norm{\Lambda(q,s)Q_{q,B}[T]^{-1}} \\
    &\le 2\norm{TQ_{q,B}[T]^{-1}}\norm{\Lambda(q,s)Q_{q,B}[T]^{-1}} \longrightarrow 0\, \text{ as $s \to q$,}
\end{align*}
since we showed before that $\norm{\Lambda(q,s)Q_{q,B}[T]^{-1}} \to 0$ as $s \to q$.\\
This proves that the two functions are continuous, it remains to prove that they are continuously differentiable.
For $s = x + J y\in\rho_{S,B}(T)$, we have
by a direct computation via the definition of derivative
\begin{align*}
\pdv{}{x} Q_{s,B}[T]^{-1}&=\lim_{\mathbb{R}\ni h\to 0}\frac{1}{h}\Big(Q_{x+Jy+h,B}[T]^{-1}-Q_{x+Jy,B}[T]^{-1}\Big)
\\&=\lim_{\mathbb{R}\ni h\to 0}\frac{1}{h}Q_{x+Jy,B}[T]^{-1}\Big(2hT-2hx-h^2\Big)Q_{x+Jy+h,B}[T]^{-1}
\end{align*}
and so we have
\begin{equation}
\label{eq:derivative in u of Q_s}
\frac{\partial}{\partial x} Q_{s,B}[T]^{-1}=2Q_{s,B}[T]^{-1}(T-x)Q_{s,B}[T]^{-1} = 2 Q_{s,B}[T]^{-1}(TQ_{s,B}[T])^{-1}-2xQ_{s,B}[T]^{-2},
\end{equation}
which is a sum of products of continuous functions in $s$ and so it is continuous in $s$.
Similarly we find
\begin{align*}
\pdv{}{y} Q_{s,B}[T]^{-1} &=\lim_{\mathbb{R}\ni h\to 0}\frac{1}{h}\Big(Q_{x+Jy+Jh,B}[T]^{-1}-Q_{x+Jy,B}[T]^{-1}\Big)\\
&=\lim_{\mathbb{R}\ni h\to 0}\frac{1}{h}Q_{x+Jy,B}[T]^{-1}\Big(Q_{x+Jy,B}[T]-Q_{x+Jy+Jh,B}[T]\Big)Q_{x+Jy+Jh,B}[T]^{-1}
\\&=\lim_{\mathbb{R}\ni h\to 0}\frac{1}{h}Q_{x+Jy,B}[T]^{-1}\Big(-2yh-h^2\Big)Q_{x+Jy+Jh,B}[T]^{-1},
\end{align*}
since
\begin{align*}
    Q_{x+Jy,B}[T]-Q_{x+Jy+Jh,B}[T]&=T^2-2 x T+x^2+y^2-(T^2-2 x T+x^2+(y+h)^2)\\
    &=y^2-(y+h)^2=-2yh-h^2,
\end{align*}
so we find
\begin{equation}
\label{eq:derivative in v of Q_s}
\frac{\partial}{\partial y} Q_{s,B}[T]^{-1}=-2yQ_{s,B}[T]^{-2},
\end{equation}
which, as before, is continuous in $s$.\\
Now, in order to prove that $Q_{s,B}[T]^{-1}$ is continuosly differentiable, we write the variable $s\in \R^{n+1}$ in terms of its $n+1$ real coordinates as $s = s_0 + \sum_{i=1}^n s_ie_i$ and we consider the function $s \mapsto Q_{s,B}[T]^{-1}$ as a composition of the following functions:
\begin{equation*}
    s \longmapsto \begin{pmatrix}
        x \\
        y
    \end{pmatrix} = \begin{pmatrix}
        s_0 \\
        \sqrt{s_1^2+\dots+s_n^2}
    \end{pmatrix} \longmapsto Q_{x+Jy,B}[T]^{-1},
\end{equation*}
where $J \in \mathbb{S}$ is fixed.
Then the partial derivative with respect to $s_0$ corresponds to the partial derivative with respect to $x$, which exists and is continuous as we proved before. Moreover the partial derivative with respect to $s_i$, with $1\le i \le n$ if $y \ne 0$ can be computed as
\begin{equation*}
    \pdv{}{s_i}Q_{s,B}[T]^{-1} = -2y Q_{s,B}[T]^{-2}\pdv{v}{s_i} = -2s_i Q_{s,B}[T]^{-1},
\end{equation*}
otherwise, if $y = 0$ (and so $s \in \R$), we can simply choose $J = e_i$ and the partial derivative with resepect to $s_i$ coincides with the partial derivative with respect to $y$. In both cases, the derivative with respect to $s_i$ exists and is continuous, and this proves that the function is continuosly differentiable.
Now we do the same for $TQ_{s,B}[T]^{-1}$, we see that
\begin{align*}
\pdv{}{x}T Q_{s,B}[T]^{-1}&=\lim_{\mathbb{R}\ni h\to 0}\frac{1}{h}\Big(TQ_{x+Jy+h,B}[T]^{-1}-TQ_{x+Jy,B}[T]^{-1}\Big)
\\&=\lim_{\mathbb{R}\ni h\to 0}\frac{1}{h}TQ_{x+Jy,B}[T]^{-1}\Big(2hT-2hx-h^2\Big)Q_{x+Jy+h,B}[T]^{-1}\\
&=\lim_{\mathbb{R}\ni h\to 0}TQ_{x+Jy,B}[T]^{-1}\Big(2T-2x-h\Big)Q_{x+Jy+h,B}[T]^{-1},
\end{align*}
which gives
\begin{equation}
    \label{eq:derivative in u TQ_s}
    \pdv{}{x}T Q_{s,B}[T]^{-1}= 2 TQ_{s,B}[T]^{-1}(TQ_{s,B}[T]^{-1})-2xTQ_{s,B}[T]^{-2},
\end{equation}
and
\begin{align*}
\pdv{}{y} TQ_{s,B}[T]^{-1} &=\lim_{\mathbb{R}\ni h\to 0}\frac{1}{h}\Big(TQ_{x+Jy+Jh,B}[T]^{-1}-TQ_{x+Jy,B}[T]^{-1}\Big)\\
&=\lim_{\mathbb{R}\ni h\to 0}\frac{1}{h}TQ_{x+Jy,B}[T]^{-1}\Big(Q_{x+Jy,B}[T]-Q_{x+Jy+Jh,B}[T]\Big)Q_{x+Jy+Jh,B}[T]^{-1}
\\&=\lim_{\mathbb{R}\ni h\to 0}\frac{1}{h}TQ_{x+Jy,B}[T]^{-1}\Big(-2yh-h^2\Big)Q_{x+Jy+Jh,B}[T]^{-1},
\end{align*}
which gives
\begin{equation}
\label{eq:derivative in v TQ_s}
     \pdv{}{y} TQ_{s,B}[T]^{-1} = -2y TQ_{s,B}[T]^{-2}.
\end{equation}
The derivatives of $TQ_{s,B}[T]^{-1}$ are continuous functions in $s$ and, with the same argument as before, this proves that $TQ_{s,B}[T]^{-1}$ is continuously differentiable.
\end{proof}

The next result now is a simple consequence of Proposition~\ref{prop:Q_s and TQ_s are C^1}, since the $S$-resolvent operators are sums of products of continuously differentiable functions.

\begin{corollary}
\label{cor:S-resolvent operators are C^1}
     Let $T:\dom(T) \subseteq V \to V$ be a closed right-linear operator. Let $B \subseteq \dom(T)$ be a right submodule and suppose $\rho_{S,B}(T) \ne \emptyset$. Then the left $S$-resolvent operator
    \begin{equation*}
        s:\rho_{S,B}(T) \longmapsto S^{-1}_{L,B}(s,T) \in \boundOP(V)
    \end{equation*}
    and the right $S$-resolvent operator
    \begin{equation*}
        s:\rho_{S,B}(T) \longmapsto S^{-1}_{R,B}(s,T) \in \boundOP(V)
    \end{equation*}
    are continuously differentiable functions in $s$.
\end{corollary}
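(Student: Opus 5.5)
The corollary follows from Proposition~\ref{prop:Q_s and TQ_s are C^1} once we write both $S$-resolvent operators as finite sums of products of $C^1$ functions of $s$. We treat $\boundOP(V)$ as a real Banach space and use that multiplication of bounded operators is a continuous bilinear map. Hence the product of two continuously differentiable $\boundOP(V)$-valued functions is again $C^1$, with the Leibniz rule holding for each partial derivative $\partial/\partial s_i$, $i=0,\dots,n$.

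\textbf{Left resolvent.} We write
\[
S^{-1}_{L,B}(s,T)=Q_{s,B}[T]^{-1}\,\overline{s}-TQ_{s,B}[T]^{-1}.
\]
Right multiplication by $\overline{s}=s_0-\sum_{i}s_ie_i$ expands as
\[
Q_{s,B}[T]^{-1}\overline{s}=s_0Q_{s,B}[T]^{-1}-\sum_{i=1}^n s_i\,\bigl(Q_{s,B}[T]^{-1}e_i\bigr),
\]
where $A\mapsto Ae_i$ denotes the operator $v\mapsto A(e_iv)$. Each map $A\mapsto Ae_i$ is a bounded real-linear map on $\boundOP(V)$ by \eqref{Eq_Norm_estimate_paravector}. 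Therefore each term $s_iQ_{s,B}[T]^{-1}e_i$ is the product of the smooth scalar function $s_i$ and the $C^1$ function $Q_{s,B}[T]^{-1}e_i$, so it is $C^1$. The second summand $TQ_{s,B}[T]^{-1}$ is $C^1$ directly by Proposition~\ref{prop:Q_s and TQ_s are C^1}.

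\textbf{Right resolvent.} The one point needing care is that $T$ is unbounded, so we must not write $S^{-1}_{R,B}(s,T)$ as a product with $T$ as a factor. Instead we expand
\[
S^{-1}_{R,B}(s,T)=\overline{s}\,Q_{s,B}[T]^{-1}-TQ_{s,B}[T]^{-1},
\]
which is legitimate because $Q_{s,B}[T]^{-1}$ maps into $\dom_B(T^2)\subseteq\dom(T)$. Left multiplication by the paravector $\overline{s}$ is handled exactly as before: it is a finite sum of the smooth coefficients $s_i$ times the bounded real-linear maps $A\mapsto e_iA$ applied to $Q_{s,B}[T]^{-1}$. The term $TQ_{s,B}[T]^{-1}$ is again treated as a single $C^1$ function supplied by the proposition.

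Summing finitely many $C^1$ terms gives the claim for both $S^{-1}_{L,B}(s,T)$ and $S^{-1}_{R,B}(s,T)$ on the open set $\rho_{S,B}(T)$, which is open by Proposition~\ref{prop:serie per Q_s,b[T]}.
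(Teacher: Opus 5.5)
Your proof is correct and follows the same route as the paper, which simply notes that both $S$-resolvent operators are finite sums of products of the $C^1$ functions $Q_{s,B}[T]^{-1}$ and $TQ_{s,B}[T]^{-1}$ from Proposition~\ref{prop:Q_s and TQ_s are C^1} with the smooth paravector $\overline{s}$. Your added details make explicit what the paper leaves implicit: you split $(\overline{s}-T)Q_{s,B}[T]^{-1}$ so that $T$ appears only inside the bounded operator $TQ_{s,B}[T]^{-1}$, and you handle multiplication by $\overline{s}$ coordinatewise through the bounded real-linear maps $A\mapsto Ae_i$ and $A\mapsto e_iA$.
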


\section{Analyticity of the $S$-resolvent operators with boundary conditions}
\label{section:analyticity}

In this section we investigate the analyticity of the $S$-resolvent operators when the $S$-spectrum is associated with boundary conditions. While the continuous differentiability of the $S$-resolvent operators can be extended to this case with small modifications, the slice hyperholomorphicity of the $S$-resolvent operators strongly depends on the possibility of commuting $T$ and $Q_s[T]^{-1}$ on the range of $\dom(T)$ and so must be explored more carefully.\\
Firstly, we show that $S^{-1}_{L,B}(s,T)$ and $S^{-1}_{R,B}(s,T)$ are, respectively, a right and a left slice function on $\rho_{S,B}(T)$.

\begin{proposition}
    \label{prop:S resolvent are slice}
    Let $T:\dom(T) \subseteq V \to V$ be a closed right-linear operator. Let $B \subseteq \dom(T)$ be a right submodule and suppose $\rho_{S,B}(T) \ne \emptyset$. Then for every $s = x+Jy \in \rho_{S,B}(T)$, with $J \in \mathbb{S}$, we have
    \begin{equation*}
        S^{-1}_{L,B}(s,T) = f_0(x,y) + f_1(x,y) J\quad\text{and}\quad S^{-1}_{R,B}(s,T) = f_0(x,y)+Jf_1(x,y),
    \end{equation*}
    with
\begin{align*}
    f_0(x,y) &= -TQ_{s,B}[T]^{-1}+Q_{s,B}[T]^{-1}x,\\
    f_1(x,y) &= -Q_{s,B}[T]^{-1}y.
\end{align*}
that is $S^{-1}_{L,B}(s,T)$, resp. $S^{-1}_{R,B}(s,T)$, is a continuosly differentiable right, resp. left, slice function on $\rho_{S,B}(T)$. Moreover
\begin{align}
   \notag \frac{\partial f_0}{\partial x}(x,y) =& -2T Q_{s,B}[T]^{-1}T Q_{s,B}[T]^{-1}+2xT Q_{s,B}[T]^{-2}\\
   \label{eq: partial u f_0}
    &+2x Q_{s,B}[T]^{-1} T  Q_{s,B}[T]^{-1}-2x^2 Q_{s,B}[T]^{-2}+ Q_{s,B}[T]^{-1},\\
    \label{eq:partial v f_0}
    \frac{\partial f_0}{\partial y}(x,y) =&\,\,2y T  Q_{s,B}[T]^{-2}-2xy  Q_{s,B}[T]^{-2},\\
    \label{eq:partial u f_1}
    \frac{\partial f_1}{\partial x}(x,y) =& -2y  Q_{s,B}[T]^{-1} T  Q_{s,B}[T]^{-1} + 2x y  Q_{s,B}[T]^{-2},\\
    \label{eq:partial v f_1}
    \frac{\partial f_1}{\partial y}(x,y) =& 2y^2  Q_{s,B}[T]^{-2}- Q_{s,B}[T]^{-1}.
\end{align}
\end{proposition}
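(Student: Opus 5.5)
The plan is to compute directly from the definitions. Write $s=x+Jy$ with $J\in\mathbb{S}$, so that $\overline{s}=x-Jy$. Since $Q_{s,B}[T]$ depends on $s$ only through $s_0=x$ and $|s|^2=x^2+y^2$, the operator $Q_{s,B}[T]^{-1}$ is a function of $(x,y)$ alone and is even in $y$. Using right-linearity of $Q_{s,B}[T]^{-1}$ for the left resolvent, we get
\begin{equation*}
S^{-1}_{L,B}(s,T)=Q_{s,B}[T]^{-1}x-TQ_{s,B}[T]^{-1}-Q_{s,B}[T]^{-1}y\,J=f_0(x,y)+f_1(x,y)J .
\end{equation*}
For the right resolvent, a real scalar commutes with the operators and $J$ is a left multiplication, so
\begin{equation*}
S^{-1}_{R,B}(s,T)=(x-Jy-T)Q_{s,B}[T]^{-1}=f_0(x,y)+Jf_1(x,y).
\end{equation*}
Evenness of $Q_{s,B}[T]^{-1}$ in $y$ gives the compatibility conditions: $f_0$ is even and $f_1=-Q_{s,B}[T]^{-1}y$ is odd in $y$. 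Hence both resolvents are slice functions, of the right and left type respectively. Continuous differentiability of $f_0$ and $f_1$ follows from Proposition~\ref{prop:Q_s and TQ_s are C^1}, or equivalently from Corollary~\ref{cor:S-resolvent operators are C^1}.

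Next I obtain the four partial derivatives by inserting \eqref{eq:derivative in u of Q_s}, \eqref{eq:derivative in v of Q_s}, \eqref{eq:derivative in u TQ_s} and \eqref{eq:derivative in v TQ_s} and applying the product rule to the scalar factors $x$ and $y$. Writing $Q:=Q_{s,B}[T]^{-1}$ as shorthand for this paragraph, the computations are as follows.
\begin{align*}
\partial_x f_0&=-\partial_x(TQ)+(\partial_x Q)x+Q\\
&=-2TQTQ+2xTQ^2+2xQTQ-2x^2Q^2+Q,\\
\partial_y f_0&=2yTQ^2-2xyQ^2,\\
\partial_x f_1&=-2yQTQ+2xyQ^2,\\
\partial_y f_1&=2y^2Q^2-Q.
\end{align*}
These are exactly the expressions \eqref{eq: partial u f_0}--\eqref{eq:partial v f_1}.

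The main point needing care is the order of the operators. I must keep $QTQ$ and $TQTQ$ in their factored form and never commute $T$ past $Q$, since by Proposition~\ref{prop:commutativity Q_s,b and T} this commutation generally fails. Two further checks are needed. First, the derivative formulas \eqref{eq:derivative in u TQ_s}, in which $T$ is applied to a limit, are justified by the closedness of $T$, as in the earlier proof. Second, $x$ and $y$ are real, so they commute with every operator, which lets me place these scalars freely.
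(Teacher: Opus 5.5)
Your proposal is correct and follows the paper's proof. Both write $\overline{s}=x-Jy$ to split off $f_0$ and $f_1$, and both get the compatibility conditions from the fact that $Q_{s,B}[T]$ depends only on $x$ and $x^2+y^2$. Both then obtain the four partial derivatives from \eqref{eq:derivative in u of Q_s}--\eqref{eq:derivative in v TQ_s}, keeping the order of $T$ and $Q_{s,B}[T]^{-1}$ throughout.

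One side remark is slightly off. In the paper's derivation of \eqref{eq:derivative in u TQ_s}, $T$ is applied to an exact difference identity, not to a limit. Closedness of $T$ enters only through the boundedness and continuity of $TQ_{s,B}[T]^{-1}$. Since you only cite that formula, this does not affect your argument.
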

\begin{proof}
    We already know from Corollary~\ref{cor:S-resolvent operators are C^1} that $S^{-1}_{L,B}(s,T)$ and $S^{-1}_{R,B}(s,T)$ are continuosly differentiable functions. Now let $s = x+Jy \in \rho_{S,B}(T)$, we have
    \begin{align*}
        S^{-1}_{L,B}(s,T) &= Q_{s,B}[T]^{-1}\ov{s}-TQ_{s,B}[T]^{-1}\\
        &= Q_{s,B}[T]^{-1}(x-Jy)-TQ_{s,B}[T]^{-1}\\
        &= -TQ_{s,B}[T]^{-1}+Q_{s,B}[T]^{-1}x-yQ_{s,B}[T]^{-1}J\\
        &= f_0(x,y)+f_1(x,y)J,
    \end{align*}
    and analogously
        \begin{align*}
        S^{-1}_{R,B}(s,T) &=(\ov{s}-T) Q_{s,B}[T]^{-1}\\
        &= (x-Jy-T)Q_{s,B}[T]^{-1}\\
        &= -TQ_{s,B}[T]^{-1}+Q_{s,B}[T]^{-1}x+J(-yQ_{s,B}[T]^{-1})\\
        &= f_0(x,y)+Jf_1(x,y),
    \end{align*}
    which proves that $S^{-1}_{L,B}(s,T)$, resp. $S^{-1}_{R,B}(s,T)$, is a right, resp. left, slice function, since
    \begin{equation*}
        f_0(x,y) =  -TQ_{s,B}[T]^{-1}+Q_{s,B}[T]^{-1}x = f_0(x,-y),
    \end{equation*}
    and
    \begin{equation*}
        f_1(x,-y) = Q_{s,B}[T]^{-1}y = -f_1(x,y).
    \end{equation*}
    Using \eqref{eq:derivative in u of Q_s}, \eqref{eq:derivative in v of Q_s}, \eqref{eq:derivative in u TQ_s} and \eqref{eq:derivative in v TQ_s} we find
    \begin{align*}
        \frac{\partial f_0}{\partial x}(x,y) &= - \pdv{}{x} \bigl(T Q_{s,B}[T]^{-1}\bigr) + \pdv{}{x}\bigl( Q_{s,B}[T]^{-1}\bigr)x + Q_{s,B}[T]^{-1}
        \\
        &= -2 TQ_{s,B}[T]^{-1}TQ_{s,B}[T]^{-1}+2xTQ_{s,B}[T]^{-2}+ 2x Q_{s,B}[T]^{-1}TQ_{s,B}[T]^{-1}
        \\
        &-2x^2Q_{s,B}[T]^{-2}+ Q_{s,B}[T]^{-1},
    \end{align*}
    and
    \begin{align*}
        \frac{\partial f_0}{\partial y}(x,y) &=  - \pdv{}{y} \bigl(T Q_{s,B}[T]^{-1}\bigr) + \pdv{}{y}\bigl( Q_{s,B}[T]^{-1}\bigr)x \\
        &= 2y TQ_{s,B}[T]^{-2}-2xyQ_{s,B}[T]^{-2}.
    \end{align*}
    Similarly
    \begin{align*}
        \frac{\partial f_1}{\partial x}(x,y) &= - \pdv{}{x} \bigl( Q_{s,B}[T]^{-1}  \bigr)y\\
        &= -2y Q_{s,B}[T]^{-1}TQ_{s,B}[T]^{-1}+2xyQ_{s,B}[T]^{-2},
    \end{align*}
    and
    \begin{align*}
       \frac{\partial f_1}{\partial y}(x,y) &= - \pdv{}{y} \bigl( Q_{s,B}[T]^{-1}  \bigr)y-Q_{s,B}[T]^{-1}\\
        &= 2y^2Q_{s,B}[T]^{-2}-Q_{s,B}[T]^{-1}.
    \end{align*}
\end{proof}

Now we come to the main topic concerning the $S$-resolvent operators with boundary conditions, that is its analyticity. This property is crucial since the $S$-functional calculus, which generalises the Riesz-Dunford functional calculus to the noncommutative setting, is based on the Cauchy formula for slice hyperholomorphic functions and requires the holomorphicity of the resolvent operators. Moreover, for the same reason, also the $H^\infty$-functional calculus for Clifford operators requires the same property. \\
Since, according to Proposition~\ref{prop:S resolvent are slice}, the $S$-resolvent operators are continuously differentiable slice functions, now we must investigate if the Cauchy-Riemann equations
\begin{align}
\label{eq:first cauchy riemann equation}
    \frac{\partial f_0}{\partial x}(x,y) - \frac{\partial f_1}{\partial y}(x,y) &= 0,\\
\label{eq:second cauchy riemann equation}
    \frac{\partial f_0}{\partial y}(x,y) + \frac{\partial f_1}{\partial x}(x,y) &= 0
\end{align}
are satisfied on $V$ or, at least, on some subspace.
From \eqref{eq: partial u f_0} and \eqref{eq:partial v f_1}, Equation \eqref{eq:first cauchy riemann equation} becomes
\begin{equation}
\label{eq:first CR for S resolvent op's}
    -TQ_{s,B}[T]^{-1}TQ_{s,B}[T]^{-1}+xTQ_{s,B}[T]^{-2}+ x Q_{s,B}[T]^{-1}TQ_{s,B}[T]^{-1}+ Q_{s,B}[T]^{-1}-\abs{s}^2Q_{s,B}[T]^{-2}=0,
\end{equation}
moreover using \eqref{eq:partial v f_0} and \eqref{eq:partial u f_1}, Equation \eqref{eq:second cauchy riemann equation} becomes
\begin{equation}
    \label{eq:second CR for S resolvent op's}
    y(T Q_{s,B}[T]^{-2}- Q_{s,B}[T]^{-1} T Q_{s,B}[T]^{-1})=0.
\end{equation}

The following proposition provides a sufficient condition to identify points in $V$ where the Cauchy-Riemann equations for the $S$-resolvent operators are satisfied.

\begin{proposition}
\label{prop:sufficient condition for cauchy riemann}
    Let $T:\dom(T) \subseteq V \to V$ be a closed right-linear operator. Let $B \subseteq \dom(T)$ be a right submodule. Let $s \in \rho_{S,B}(T)$ and $v \in V$, suppose that one of the following statements holds:
    \begin{enumerate}
        \item  $s \notin \R$ and $Q_{s,B}[T]^{-1}v \in \ker [T,Q_{s,B}[T]^{-1}]$;
        \item $s \in \R$ and $[T,Q_{s,B}[T]^{-1}]Q_{s,B}[T]^{-1}v \in \ker (T-sI)$,
    \end{enumerate}
    then conditions \eqref{eq:first CR for S resolvent op's} and \eqref{eq:second CR for S resolvent op's} hold in $v$.
\end{proposition}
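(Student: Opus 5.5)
We verify the two Cauchy--Riemann identities \eqref{eq:first CR for S resolvent op's} and \eqref{eq:second CR for S resolvent op's} directly, evaluated at $v$. Write $Q:=Q_{s,B}[T]^{-1}$, $u:=Qv\in\dom_B(T^2)$ and $C:=[T,Q]=TQ-QT$ on $\dom(T)$. The key algebraic identity is
\begin{equation*}
Q(T^2-2xT+\abs{s}^2)Qv=Qv,
\end{equation*}
which holds because $Qv\in\dom_B(T^2)$ and $Q_{s,B}[T]Qv=v$. We will reduce the left-hand side of \eqref{eq:first CR for S resolvent op's} to expressions in the commutator applied to $u$ and $Tu$. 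Note first that
\begin{equation*}
TQ^2v-QTQv=TQu-QTu=Cu .
\end{equation*}
Hence \eqref{eq:second CR for S resolvent op's} at $v$ reads $y\,Cu=0$.

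\textbf{Case (1).} By hypothesis $u\in\ker C$, so $Cu=0$ and \eqref{eq:second CR for S resolvent op's} holds. For \eqref{eq:first CR for S resolvent op's}, we use $Cu=0$ to replace $xTQ^2v$ by $xQTQv$. The left-hand side becomes
\begin{equation*}
-TQTu+2xQTu+u-\abs{s}^2Qu .
\end{equation*}
By the key identity, $u=QT^2u-2xQTu+\abs{s}^2Qu$, so the expression collapses to $QT^2u-TQTu$. It remains to show $TQ(Tu)=Q T(Tu)$, i.e.\ that $Tu\in\ker C$. By Proposition~\ref{prop:commutativity Q_s,b and T}, $u\in\ker C$ means $u=Q_{s,B}[T]w$ with $w\in\dom(T^3)\cap B\cap T^{-1}(B)$. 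Then $Qu=w$ and $Tw\in\dom_B(T^2)$. Since $TQu=QTu$, we get $Tw=QTu$; applying $Q_{s,B}[T]$ gives $Tu=Q_{s,B}[T]Tw$. To place $Tu$ in $\ker C$ via the same proposition, we need $Tw\in\dom(T^3)\cap B\cap T^{-1}(B)$, which is not clear.

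So instead we compute directly. From $Tw=QTu$, we have $QT(Tu)=QT\,Q_{s,B}[T]Tw$. On the other hand, $TQTu=T(Tw)=T^2w$. Since $w\in\dom(T^3)\cap B\cap T^{-1}(B)$, equation~\eqref{eq:commutativity T and Q_s} applies to $w$ and to $Tw$, giving
\begin{equation*}
T^2u=T^2Q_{s}[T]w=Q_{s}[T]T^2w .
\end{equation*}
It remains to check that $T^2w\in\dom_B(T^2)$ so that $QT^2u=T^2w$. This is the main obstacle: we need $T^2w\in B$, which the hypotheses do not obviously give. If this fails, the fallback is to expand $T^2u$ using the key identity and absorb terms via $Cu=0$; this is where I expect the real work to lie.

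\textbf{Case (2).} Here $y=0$, so \eqref{eq:second CR for S resolvent op's} holds trivially, and $\abs{s}^2=x^2$, $s=x$. Without using $Cu=0$, the left-hand side of \eqref{eq:first CR for S resolvent op's} is
\begin{equation*}
-TQTu+xTQu+xQTu+u-x^2Qu .
\end{equation*}
Substituting $u=QT^2u-2xQTu+x^2Qu$ and $TQu=QTu+Cu$, this becomes
\begin{equation*}
QT^2u-TQTu+xCu .
\end{equation*}
Now we expand $TQTu$. Since $u\in\dom(T^2)$ we have $Tu\in\dom(T)$, so $TQ(Tu)=QT^2u+C(Tu)$. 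The expression therefore becomes $-C(Tu)+xCu$. We then aim to rewrite $C(Tu)$ as $T(Cu)$ modulo terms that cancel, using $Cu=TQu-QTu$. The hypothesis $(T-x)Cu=0$ should then finish the proof. Making the identification $C(Tu)=T\,Cu$ rigorous, with correct domains, is the delicate point in this case.
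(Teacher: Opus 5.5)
Your proposal leaves both cases open, and both obstacles have the same cause. Your reduction rests on the identity $u=QT^2u-2xQTu+\abs{s}^2Qu$ (i.e.\ $Q\,Q_{s,B}[T]u=u$). This produces terms $QT^2u$ and $QTu$ that cannot be matched with $TQTu$ and $TQu$. As a result, in Case (1) you are left needing $Tu\in\ker C$, and in Case (2) you are left needing to compare $C(Tu)$ with $T(Cu)$.

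The Case (1) detour through Proposition~\ref{prop:commutativity Q_s,b and T} does not work as written. Besides the unproved requirement $T^2w\in B$, applying \eqref{eq:commutativity T and Q_s} to $Tw$ requires $Tw\in\dom(T^3)$, which you do not have. In Case (2), $C(Tu)=T(Cu)$ is not an identity you can establish ``modulo terms that cancel'' independently of the hypothesis: as shown below, it is \emph{equivalent} to $(T-x)Cu=0$.

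The missing ingredient is the identity $Q_{s,B}[T](Qu)=u$, i.e.\ $u=T^2Qu-2xTQu+\abs{s}^2Qu$. It is valid because $Qu=Q_{s,B}[T]^{-2}v\in\dom_B(T^2)$. Substitute this for the isolated $u$ in the left-hand side of \eqref{eq:first CR for S resolvent op's} at $v$ and cancel the $\abs{s}^2Qu$ terms. Without any hypothesis, this gives
\begin{equation*}
-TQTu+xTQu+xQTu+T^2Qu-2xTQu=T(TQu-QTu)-x(TQu-QTu)=(T-x)Cu,
\end{equation*}
with all terms defined since $Q$ maps $V$ into $\dom(T^2)$. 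Thus \eqref{eq:first CR for S resolvent op's} at $v$ reads $(T-x)Cu=0$ and \eqref{eq:second CR for S resolvent op's} reads $yCu=0$, and both cases follow at once.

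This is essentially the paper's argument. In Case (1) it uses $QTu=TQu$ to write $TQTu=T^2Qu$ and $xTQu+xQTu=2xTQu$, then recognizes $-Q_s[T]Qu+u=0$. In Case (2) it uses the hypothesis to replace $-TQTu+xQTu$ by $-T^2Qu+xTQu$ and concludes the same way.

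If you want to keep your own reduction, subtract your identity from this one to get $C(Tu)+T(Cu)=2x\,Cu$. In Case (1) this gives $C(Tu)=0$ directly, with no need for Proposition~\ref{prop:commutativity Q_s,b and T}. In Case (2) it gives $C(Tu)=2xCu-xCu=xCu$, so $-C(Tu)+xCu=0$.
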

\begin{proof}
    Suppose that $s \notin \R$ and $Q_{s,B}[T]^{-1}v \in \ker [T,Q_{s,B}[T]^{-1}]$, that is
    \begin{equation*}
       0= (T Q_{s,B}[T]^{-1}-Q_{s,B}[T]^{-1}T)Q_{s,B}[T]^{-1}v = T Q_{s,B}[T]^{-2}v-Q_{s,B}[T]^{-1}T Q_{s,B}[T]^{-1}v,
    \end{equation*}
then clearly \eqref{eq:second CR for S resolvent op's} holds in $v$. Moreover
\begin{align*}
    &(-TQ_{s,B}[T]^{-1}TQ_{s,B}[T]^{-1}+xTQ_{s,B}[T]^{-2}+ x Q_{s,B}[T]^{-1}TQ_{s,B}[T]^{-1}+ Q_{s,B}[T]^{-1}-\abs{s}^2Q_{s,B}[T]^{-2})v\\
    &= (-TQ_{s,B}[T]^{-1}T+xTQ_{s,B}[T]^{-1}+xQ_{s,B}[T]^{-1}T-\abs{s}^2Q_{s,B}[T]^{-1})Q_{s,B}[T]^{-1}v+Q_{s,B}[T]^{-1}v\\
    &= -(T^2 Q_{s,B}[T]^{-1}-2xTQ_{s,B}[T]^{-1}+\abs{s}^2Q_{s,B}[T]^{-1})Q_{s,B}[T]^{-1}v+Q_{s,B}[T]^{-1}v\\
    &=-Q_s[T] Q_{s,B}[T]^{-1}Q_{s,B}[T]^{-1}v+Q_{s,B}[T]^{-1}v\\
    &= -Q_{s,B}[T]Q_{s,B}[T]^{-1}Q_{s,B}[T]^{-1}v+Q_{s,B}[T]^{-1}v\\
    &= -Q_{s,B}[T]^{-1}v+Q_{s,B}[T]^{-1}v = 0,
\end{align*}
which proves that \eqref{eq:first CR for S resolvent op's} holds in $v$.\\
Now suppose that $s=x \in \R$, then $y=0$ and so Equation \eqref{eq:second CR for S resolvent op's} becomes trivial. Moreover, by assumption, we have
\begin{align*}
    0 &= T [T,Q_{x,B}[T]^{-1}]Q_{x,B}[T]^{-1}v-x[T,Q_{x,B}[T]^{-1}]Q_{x,B}[T]^{-1}v\\
    &=T^2Q_{x,B}[T]^{-2}v-TQ_{x,B}[T]^{-1}TQ_{x,B}[T]^{-1}v-xTQ_{x,B}[T]^{-2}v+xQ_{x,B}[T]^{-1}TQ_{x,B}[T]^{-1}v,
\end{align*}
which gives
\begin{equation}
    \label{eq:reformulation hyp 2 prop sufficient condition}
    -TQ_{x,B}[T]^{-1}TQ_{x,B}[T]^{-1}v+xQ_{x,B}[T]^{-1}TQ_{x,B}[T]^{-1}v = -T^2Q_{x,B}[T]^{-2}v+xTQ_{x,B}[T]^{-2}v.
\end{equation}
Hence, substituting \eqref{eq:reformulation hyp 2 prop sufficient condition} in \eqref{eq:first CR for S resolvent op's}, we find that
\begin{align*}
    &-TQ_{x,B}[T]^{-1}TQ_{x,B}[T]^{-1}v+xTQ_{x,B}[T]^{-2}v+ x Q_{x,B}[T]^{-1}TQ_{x,B}[T]^{-1}v
    \\
    &+ Q_{x,B}[T]^{-1}v-x^2Q_{x,B}[T]^{-2}v \\
    &=-T^2Q_{x,B}[T]^{-2}v+2xTQ_{x,B}[T]^{-2}v-x^2Q_{x,B}[T]^{-2}v+Q_{x,B}[T]^{-1}v\\
    &=-Q_{x}[T]Q_{x,B}[T]^{-2}v+Q_{x,B}[T]^{-1}v = -Q_{x,B}[T]Q_{x,B}[T]^{-2}v+Q_{x,B}[T]^{-1}v\\
    &=-Q_{x,B}[T]^{-1}v+Q_{x,B}[T]^{-1}v = 0.
\end{align*}
that is, the first Cauchy-Riemann equation holds in $v$.
\end{proof}

In the next proposition we show that these conditions are actually also necessary.

\begin{proposition}
\label{prop:necessary condition for cauchy riemann}
    Let $T:\dom(T) \subseteq V \to V$ be a closed right-linear operator. Let $B \subseteq \dom(T)$ be a right submodule. Let $s \in \rho_{S,B}(T)$, suppose that conditions \eqref{eq:first CR for S resolvent op's} and \eqref{eq:second CR for S resolvent op's} hold in $v \in V$.
    \begin{enumerate}
        \item  If $s \notin \R$, then $Q_{s,B}[T]^{-1}v \in \ker [T,Q_{s,B}[T]^{-1}]$.
        \item If $s \in \R$, then $[T,Q_{s,B}[T]^{-1}]Q_{s,B}[T]^{-1}v \in \ker (T-sI)$.
    \end{enumerate}
\end{proposition}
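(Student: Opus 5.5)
The plan is to run the computation of Proposition~\ref{prop:sufficient condition for cauchy riemann} backwards. I would split into the cases $s\notin\R$ and $s\in\R$. Throughout, abbreviate $R:=Q_{s,B}[T]^{-1}$ and use two facts. First, $TR\in\boundOP(V)$ by Proposition~\ref{prop:Q_s and TQ_s are C^1}. Second, for every $w$ the vector $Rw$ lies in $\dom_B(T^2)$, so $Q_s[T]Rw=Q_{s,B}[T]Rw=w$.

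\emph{Case $s\notin\R$.} Here $y\neq 0$, so dividing \eqref{eq:second CR for S resolvent op's} by $y$ gives directly
\begin{equation*}
TR^2v-RTRv=0,
\end{equation*}
that is, $[T,R]Rv=0$, i.e. $Rv\in\ker[T,R]$. This case needs no use of \eqref{eq:first CR for S resolvent op's}.

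\emph{Case $s=x\in\R$.} Now \eqref{eq:second CR for S resolvent op's} is vacuous, so everything must come from \eqref{eq:first CR for S resolvent op's}, with $|s|^2=x^2$. The key identity is
\begin{equation*}
Rv=Q_x[T]R^2v=T^2R^2v-2xTR^2v+x^2R^2v,
\end{equation*}
which is legitimate because $R^2v=R(Rv)\in\dom_B(T^2)$. Substituting this expression for the term $Rv$ in \eqref{eq:first CR for S resolvent op's} and collecting terms leaves
\begin{equation*}
-TRTRv+xRTRv+T^2R^2v-xTR^2v=0.
\end{equation*}
This is exactly the reverse of \eqref{eq:reformulation hyp 2 prop sufficient condition}.

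The last step is to regroup these terms as $(T-x)\bigl(TR^2v-RTRv\bigr)=0$, i.e. $[T,R]Rv\in\ker(T-sI)$. The main obstacle is domain bookkeeping: for the factorization to make sense, $[T,R]Rv$ must lie in $\dom(T)$. The plan is to check the two pieces separately. We have $R^2v\in\dom(T^2)$, so $TR^2v\in\dom(T)$ and $T(TR^2v)=T^2R^2v$. Also $RTRv\in\operatorname{ran}R\subseteq\dom(T)$, so $T(RTRv)=TRTRv$. Hence $[T,R]Rv\in\dom(T)$ and applying $T-x$ termwise reproduces the displayed identity, which concludes the proof.
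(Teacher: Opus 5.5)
Your proof is correct and follows essentially the same route as the paper. For $s\notin\R$ you divide the second Cauchy--Riemann equation by $y$. For $s=x\in\R$ you use $Q_{x,B}[T]^{-1}v=Q_x[T]Q_{x,B}[T]^{-2}v$ to turn the first equation into $(T-x)[T,Q_{x,B}[T]^{-1}]Q_{x,B}[T]^{-1}v=0$, which is the paper's computation (with $u=Q_{s,B}[T]^{-1}v$) regrouped. Your explicit check that $[T,Q_{x,B}[T]^{-1}]Q_{x,B}[T]^{-1}v\in\dom(T)$ is a detail the paper leaves implicit.
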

\begin{proof}
    Let $s = x+Jy \in \rho_{S,B}(T)$, $J \in \mathbb{S}$, and let $u = Q_{s,B}[T]^{-1}v$. First suppose that $s \notin \R$, that is $y \ne 0$, then Equation \eqref{eq:second CR for S resolvent op's} becomes
    \begin{equation*}
       0= T Q_{s,B}[T]^{-1}Q_{s,B}[T]^{-1}v-Q_{s,B}[T]^{-1}T Q_{s,B}[T]^{-1}v =TQ_{s,B}[T]^{-1}u-Q_{s,B}[T]^{-1}Tu,
    \end{equation*}
    that is $u \in \ker[T, Q_{s,B}[T]^{-1}]$. Now suppose that $s \in \R$, that is $s=x$ and $y=0$, Equation \eqref{eq:second CR for S resolvent op's} becomes trivial, so we must work on Equation \eqref{eq:first CR for S resolvent op's}. We have
    \begin{align*}
        0 =& -T Q_{x,B}[T]^{-1}T Q_{x,B}[T]^{-1}v+xT Q_{x,B}[T]^{-2}v+x Q_{x,B}[T]^{-1} T  Q_{x,B}[T]^{-1}v\\
        &+ Q_{x,B}[T]^{-1}v-x^2 Q_{x,B}[T]^{-2}v\\
        =& -T Q_{x,B}[T]^{-1}Tu+xT Q_{x,B}[T]^{-1}u+x Q_{x,B}[T]^{-1} T  u+u-x^2 Q_{x,B}[T]^{-1}u
    \end{align*}
    and so
    \begin{align*}
        u &= T Q_{x,B}[T]^{-1}Tu-x(T Q_{x,B}[T]^{-1}u+ Q_{x,B}[T]^{-1} T  u)+x^2 Q_{x,B}[T]^{-1}u \\
        &= T(Q_{x,B}[T]^{-1}Tu-TQ_{x,B}[T]^{-1}u+TQ_{x,B}[T]^{-1}u)\\
        &-x(T Q_{x,B}[T]^{-1}u+ Q_{x,B}[T]^{-1} T  u+T Q_{x,B}[T]^{-1}u-TQ_{x,B}[T]^{-1}u) + x^2Q_{x,B}[T]^{-1}u\\
        &=T^2Q_{x,B}[T]^{-1}u-T([T,Q_{x,B}[T]^{-1}]u)-2xTQ_{x,B}[T]^{-1}u+x[T,Q_{x,B}[T]^{-1}]u+x^2Q_{x,B}[T]^{-1}u\\
        &= (T^2-2xT+x^2)Q_{x,B}[T]^{-1}u-(T-x)[T,Q_{x,B}[T]^{-1}]u\\
        &=Q_{x}[T]Q_{x,B}[T]^{-1}u-(T-x)[T,Q_{x,B}[T]^{-1}]u\\
        &=Q_{x,B}[T]Q_{x,B}[T]^{-1}u-(T-x)[T,Q_{x,B}[T]^{-1}]u\\
        &=u-(T-x)[T,Q_{x,B}[T]^{-1}]u,
    \end{align*}
    that is
    \begin{equation*}
        (T-x)[T,Q_{x,B}[T]^{-1}]u=0,
    \end{equation*}
    which means that $[T,Q_{x,B}[T]^{-1}]u \in \ker (T-x)$.
\end{proof}

\begin{remark}
    Note that $w=[T,Q_{s,B}[T]^{-1}]Q_{s,B}[T]^{-1}v \ne 0$ means that 
    \[ Q_{s,B}[T]^{-1}v \notin  \{ Q_{s,B}[T]u\,:\,u \in \dom(T^3)\cap B \cap T^{-1}(B)\}\]
     which is possible only if $T Q_{s,B}[T]^{-2}v \notin B$. 
     In this case, 
     \[w =(TQ_{s,B}[T]^{-1}-Q_{s,B}[T]^{-1}T)Q_{s,B}[T]^{-1}v\]
     cannot belong to $B$, hence $w \in \ker (T-s)$ does not contradict that $Q_{s,B}[T] = (T-s)^2$ is invertible on $\dom_B(T^2)$ when $s \in \rho_{S,B}(T) \cap \R$.
\end{remark}

Putting all together we can state the following summary theorem.

\begin{theorem}
   Let $T:\dom(T) \subseteq V \to V$ be a closed right-linear operator and $B \subseteq \dom(T)$ be a right submodule. Let $s \in \rho_{S,B}(T)$ and $v \in V$. Then:
   \begin{enumerate}
       \item if $s \notin \R$, the Cauchy-Riemann equations \eqref{eq:first CR for S resolvent op's} and \eqref{eq:second CR for S resolvent op's} hold in $v$ if and only if \[Q_{s,B}[T]^{-1}v\in \ker [T,Q_{s,B}[T]^{-1}];\]
       \item if $s \in \R$, the Cauchy-Riemann equations \eqref{eq:first CR for S resolvent op's} and \eqref{eq:second CR for S resolvent op's} hold in $v$ if and only if
       \[[T,Q_{s,B}[T]^{-1}]Q_{s,B}[T]^{-1}v \in \ker (T-sI).\]
   \end{enumerate}
\end{theorem}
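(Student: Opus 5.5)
This theorem is the conjunction of Proposition~\ref{prop:sufficient condition for cauchy riemann} and Proposition~\ref{prop:necessary condition for cauchy riemann}, so the plan is to split each of the two cases into its two implications and invoke those results. Throughout, fix $s=x+Jy\in\rho_{S,B}(T)$ and $v\in V$, and set $u=Q_{s,B}[T]^{-1}v\in\dom_B(T^2)$.

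\emph{Case $s\notin\R$, i.e.\ $y\neq 0$.} For ``if'', the hypothesis $u\in\ker[T,Q_{s,B}[T]^{-1}]$ is exactly condition (1) of Proposition~\ref{prop:sufficient condition for cauchy riemann}, so \eqref{eq:first CR for S resolvent op's} and \eqref{eq:second CR for S resolvent op's} hold in $v$. For ``only if'', Proposition~\ref{prop:necessary condition for cauchy riemann}(1) gives the conclusion directly. Its mechanism is that $y\ne0$ lets us divide \eqref{eq:second CR for S resolvent op's} by $y$, which yields $TQ_{s,B}[T]^{-1}u=Q_{s,B}[T]^{-1}Tu$.

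\emph{Case $s\in\R$, i.e.\ $y=0$.} Here \eqref{eq:second CR for S resolvent op's} holds trivially, so everything reduces to \eqref{eq:first CR for S resolvent op's}. The ``if'' direction is Proposition~\ref{prop:sufficient condition for cauchy riemann}(2), and the ``only if'' direction is Proposition~\ref{prop:necessary condition for cauchy riemann}(2).

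The only point that needs care is the real case. There we use the identity, valid for $u\in\dom_B(T^2)$,
\[
Q_x[T]\,Q_{x,B}[T]^{-1}u \;=\; u \;+\; \text{(the terms in \eqref{eq:first CR for S resolvent op's})} \;+\; (T-x)[T,Q_{x,B}[T]^{-1}]u .
\]
To justify it we need $Q_x[T]Q_{x,B}[T]^{-1}u=u$, which holds because $Q_{x,B}[T]^{-1}u$ lies in $\dom_B(T^2)$, where $Q_x[T]$ and $Q_{x,B}[T]$ agree. With this identity, \eqref{eq:first CR for S resolvent op's} at $v$ is equivalent to $(T-x)[T,Q_{x,B}[T]^{-1}]u=0$. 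Both directions of the real case therefore follow from the same algebra already carried out in the two propositions, and combining the two cases gives the theorem.
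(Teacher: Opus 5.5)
Your proposal is correct and follows the paper's argument: the paper states this theorem as a summary obtained by combining Proposition~\ref{prop:sufficient condition for cauchy riemann} and Proposition~\ref{prop:necessary condition for cauchy riemann}, and your reduction of the real case to $(T-x)[T,Q_{x,B}[T]^{-1}]u=0$ via $Q_x[T]Q_{x,B}[T]^{-1}u=u$ is exactly the computation carried out there. One cosmetic slip: the left-hand side of \eqref{eq:first CR for S resolvent op's} should enter your displayed identity with a minus sign, not a plus; with the correct sign it equals $(T-x)[T,Q_{x,B}[T]^{-1}]u$, and the equivalence is unaffected either way.
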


\begin{corollary}
     Let $T:\dom(T) \subseteq V \to V$ be a closed right-linear operator. If $ \dom(T^2) \subseteq B \subseteq \dom(T)$, then conditions \eqref{eq:first CR for S resolvent op's} and \eqref{eq:second CR for S resolvent op's} hold for all $v \in V$. In particular, $S^{-1}_{L,B}(s,T)$ (resp. $S^{-1}_{R,B}(s,T)$) is a slice right (resp. left) hyperholomorphic function.
\end{corollary}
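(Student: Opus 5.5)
When $\dom(T^2)\subseteq B\subseteq\dom(T)$, we have $\dom_B(T^2)=\dom(T^2)$ and $Q_{s,B}[T]=Q_s[T]$, so $\rho_{S,B}(T)=\rho_S(T)$. The plan is to use this to check the criteria of the summary theorem for every $v\in V$. Then the Cauchy--Riemann equations \eqref{eq:first cauchy riemann equation}--\eqref{eq:second cauchy riemann equation} hold identically on $\rho_{S,B}(T)$, and slice hyperholomorphicity follows from Proposition~\ref{prop:S resolvent are slice}.

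The key step is to show that $[T,Q_{s,B}[T]^{-1}]=0$ on all of $\dom(T)$. This follows from Proposition~\ref{prop:commutativity Q_s,b and T}: commutativity holds at $w$ if and only if $w=Q_{s,B}[T]u$ with $u\in\dom(T^3)\cap B\cap T^{-1}(B)$. Take any $w\in\dom(T)$ and set $u=Q_s[T]^{-1}w\in\dom(T^2)$. Then $Tu=T^2u-\bigl(T^2u\bigr)$ is not directly usable, so instead I write $T^2u=w+2s_0Tu-|s|^2u$. The right-hand side lies in $\dom(T)$, since $w\in\dom(T)$, $Tu\in\dom(T)$ and $u\in\dom(T)$. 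Hence $u\in\dom(T^3)$, and therefore $Tu\in\dom(T^2)\subseteq B$. Also $u\in\dom(T^2)\subseteq B$. So $u\in\dom(T^3)\cap B\cap T^{-1}(B)$ and $w\in\ker[T,Q_{s,B}[T]^{-1}]$. Equivalently, one can simply quote the classical identity \eqref{eq:commutativity T and Q_s}, which gives $TQ_s[T]^{-1}=Q_s[T]^{-1}T$ on $\dom(T)$.

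Now fix $v\in V$ and $s\in\rho_{S,B}(T)$. The range of $Q_{s,B}[T]^{-1}$ is $\dom(T^2)\subseteq\dom(T)$, so $Q_{s,B}[T]^{-1}v$ lies in the kernel of the commutator. This covers case (1) of the theorem. For $s\in\R$ we get $[T,Q_{s,B}[T]^{-1}]Q_{s,B}[T]^{-1}v=0\in\ker(T-sI)$, which covers case (2). By Proposition~\ref{prop:sufficient condition for cauchy riemann}, conditions \eqref{eq:first CR for S resolvent op's} and \eqref{eq:second CR for S resolvent op's} hold at every $v$.

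Finally, for the slice hyperholomorphicity we combine three facts:
\begin{itemize}
\item Proposition~\ref{prop:S resolvent are slice} gives the slice decomposition with $C^1$ components $f_0,f_1$ satisfying the compatibility conditions.
\item The preceding step gives the Cauchy--Riemann equations as operator identities, valid pointwise on all of $V$.
\item $\rho_{S,B}(T)$ is open by Proposition~\ref{prop:serie per Q_s,b[T]}, and it is axially symmetric because $Q_{s,B}[T]$ depends only on $s_0$ and $|s|$.
\end{itemize}
The only mildly delicate point is the domain bookkeeping in the commutation step, namely verifying that $u\in\dom(T^3)$. Everything else is direct substitution.
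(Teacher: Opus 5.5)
Your proof is correct and follows the paper's argument. The paper likewise uses $\ker[T,Q_{s,B}[T]^{-1}]=\dom(T)$ when $\dom(T^2)\subseteq B$, observes that $Q_{s,B}[T]^{-1}v\in\dom(T^2)\subseteq\dom(T)$, and then invokes Proposition~\ref{prop:sufficient condition for cauchy riemann}. Your explicit check that $u=Q_s[T]^{-1}w\in\dom(T^3)\cap B\cap T^{-1}(B)$, together with your remarks on openness and axial symmetry of $\rho_{S,B}(T)$, fills in details the paper leaves implicit; the garbled sentence about $Tu=T^2u-(T^2u)$ should simply be deleted.
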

\begin{proof}
    Indeed, in this case $\ker[T,Q_{s,B}[T]^{-1}]=\dom(T)$ and so for all $v \in V$ we have $Q_{s,B}[T]^{-1}v \in \dom(T^2)\subset \dom(T)$. Then use Proposition \ref{prop:sufficient condition for cauchy riemann}.
\end{proof}

In the next theorem we generalise the left and right $S$-resolvent equations to the case of $S$-spectrum with boundary conditions. While the left $S$-resolvent equation remains the same as in the classic case, the right $S$-resolvent equation is different and we must take into account also the commutator of $T$ and $Q_{s,B}[T]^{-1}$.

\begin{theorem}[The left and the right $S$-resolvent equations]
\label{thm:left and right S resolvent equations}
 Let $T:\dom(T) \subseteq V \to V$ be a closed right-linear operator. Let $B \subseteq \dom(T)$ be a right submodule. Let $s\in\rho_{S,B}(T)$, then the left $S$-resolvent operator satisfies the left $S$-resolvent equation
\begin{equation}\label{LeftSREQ}
S_{L,B}^{-1}(s,T)s - TS_{L,B}^{-1}(s,T) = I,
\end{equation}
and the right $S$-resolvent operator satisfies the right $S$-resolvent equation
\begin{equation}\label{RightSREQ}
sS_{R,B}^{-1}(s,T)v - S_{R,B}^{-1}(s,T)Tv = v+(\overline{s}-T)[T,Q_{s,B}[T]^{-1}]v,\quad\text{for all $v \in \dom(T)$}.
\end{equation}
\end{theorem}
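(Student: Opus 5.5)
Both identities are direct algebraic computations. They use three facts. First, $Q_{s,B}[T]^{-1}$ maps $V$ into $\dom_B(T^2)\subseteq\dom(T^2)$. Second, $Q_{s,B}[T]$ coincides with $Q_s[T]=T^2-2s_0T+\abs{s}^2 I$ there, so that $Q_s[T]Q_{s,B}[T]^{-1}=I$ on $V$. Third, the paravector identities $s+\ov{s}=2s_0$ and $\ov{s}s=s\ov{s}=\abs{s}^2$ hold, and these scalars are real, hence commute with everything.

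\emph{Left equation.} Write $Q:=Q_{s,B}[T]^{-1}$. Expand
\[
S_{L,B}^{-1}(s,T)s-TS_{L,B}^{-1}(s,T)=Q\ov{s}s-TQs-TQ\ov{s}+T^2Q .
\]
This is legitimate because $TQ$ maps into $\dom(T)$ (as $Q$ maps into $\dom(T^2)$), and $T$ is right-linear, so $T(Qv\,\ov{s})=(TQv)\ov{s}$. Since $\ov{s}s=\abs{s}^2$ and $s+\ov{s}=2s_0$ are real, the right-hand side becomes $(T^2-2s_0T+\abs{s}^2)Q=Q_s[T]Q=I$. Notably, no commutation of $T$ with $Q$ is needed; this is why the classical equation survives unchanged.

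\emph{Right equation.} Fix $v\in\dom(T)$ and write
\[
sS_{R,B}^{-1}(s,T)v-S_{R,B}^{-1}(s,T)Tv=s(\ov{s}-T)Qv-(\ov{s}-T)QTv .
\]
The plan is to replace $QTv$ by $TQv-[T,Q]v$, using the definition of the commutator on $\dom(T)$. The second term then splits as $-(\ov{s}-T)TQv+(\ov{s}-T)[T,Q]v$, and the commutator term is exactly the extra term in \eqref{RightSREQ}. The remaining terms are
\[
s\ov{s}Qv-sTQv-\ov{s}TQv+T^2Qv=(\abs{s}^2-2s_0T+T^2)Qv=Q_s[T]Qv=v .
\]
Here one has to check that left multiplication by the paravector $s$ commutes with $T$. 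This is the main point requiring care, since $T$ is only right-linear, so a priori $sTQv\ne TsQv$. In the expression above, however, $s$ always appears on the left acting on the vector $(\ov{s}-T)Qv$. So I only need to expand $s(\ov{s}-T)Qv=s\ov{s}Qv-sTQv$, which is distributivity of left multiplication. Combining $-sTQv$ with $-\ov{s}TQv$ from the other term gives $-(s+\ov{s})TQv=-2s_0TQv$. This combination is purely algebraic in the module, because both terms are left multiplications applied to the same vector $TQv$. So no commutation of $T$ with left scalars is actually used, and the identity follows.

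The only genuine obstacle is domain bookkeeping. I must ensure that $[T,Q]v$ lies in $\dom(T)$ so that $(\ov{s}-T)[T,Q]v$ makes sense. Here $TQv\in\dom(T)$ because $Qv\in\dom(T^2)$, and $QTv\in\dom_B(T^2)\subseteq\dom(T)$, so the commutator maps $\dom(T)$ into $\dom(T)$ and the formula is well defined.
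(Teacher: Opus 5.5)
Your proof is correct and follows the paper's argument. The left equation is the same direct expansion using $s+\overline{s}=2s_0$, $\overline{s}s=|s|^2$ and $Q_s[T]Q_{s,B}[T]^{-1}=I$, and for the right equation your substitution $Q_{s,B}[T]^{-1}Tv=TQ_{s,B}[T]^{-1}v-[T,Q_{s,B}[T]^{-1}]v$ is exactly the paper's step of adding and subtracting $\overline{s}TQ_{s,B}[T]^{-1}v$ and $T^2Q_{s,B}[T]^{-1}v$; your check that $[T,Q_{s,B}[T]^{-1}]$ maps $\dom(T)$ into $\dom(T)$ supplies a detail the paper leaves implicit.
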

\begin{proof}
We prove \eqref{LeftSREQ}. We have

\begin{align*}
 S_{L,B}^{-1}(s,T)s - TS_{L,B}^{-1}(s,T) &=(Q_{s,B}[T]^{-1}\overline{s}-TQ_{s,B}[T]^{-1})s -
T(Q_{s,B}[T]^{-1}\overline{s}-TQ_{s,B}[T]^{-1})\\
&=Q_{s,B}[T]^{-1}\overline{s}s-TQ_{s,B}[T]^{-1}s -
TQ_{s,B}[T]^{-1}\overline{s}+T^2Q_{s,B}[T]^{-1}
\\
&
=Q_{s,B}[T]^{-1}|s|^2-2s_0TQ_{s,B}[T]^{-1}s+T^2Q_{s,B}[T]^{-1}
\\
&
=(T^2-2s_0T+|s|^2)Q_{s,B}[T]^{-1}=I.
\end{align*}

Similarly, for $v \in \dom(T)$ we have
\begin{align*}
    sS_{R,B}^{-1}(s,T)v &- S_{R,B}^{-1}(s,T)Tv = s(\ov{s}-T)Q_{s,B}[T]^{-1}v-(\ov{s}-T)Q_{s,B}[T]^{-1}Tv\\
    &=\abs{s}^2 Q_{s,B}[T]^{-1}v-sTQ_{s,B}[T]^{-1}v-\ov{s}Q_{s,B}[T]^{-1}Tv+TQ_{s,B}[T]^{-1}Tv\\
    &=\abs{s}^2 Q_{s,B}[T]^{-1}v-sTQ_{s,B}[T]^{-1}v-\ov{s}TQ_{s,B}[T]^{-1}v+\ov{s}TQ_{s,B}[T]^{-1}v\\
    &-\ov{s}Q_{s,B}[T]^{-1}Tv+TQ_{s,B}[T]^{-1}Tv+T^2Q_{s,B}[T]^{-1}v-T^2Q_{s,B}[T]^{-1}v\\
    &= (T^2-2s_0T+\abs{s}^2)Q_{s,B}[T]^{-1}v+\ov{s}[T,Q_{s,B}[T]^{-1}]v-T[T,Q_{s,B}[T]^{-1}]v\\
    &=Q_{s,B}[T]Q_{s,B}[T]^{-1}v+(\ov{s}-T)[T,Q_{s,B}[T]^{-1}]v = v+(\ov{s}-T)[T,Q_{s,B}[T]^{-1}]v.
\end{align*}

\end{proof}

Now we are ready to show how the $S$-resolvent equation changes with boundary conditions. Since both left and right $S$-resolvent equations are used in the proof of the formula, and since Equation \eqref{RightSREQ} is different from the classic case, the new $S$-resolvent equation will be different and the commutator of $T$ and $Q_{s,B}[T]^{-1}$ will appear in this formula.

\begin{theorem}[$S$-resolvent equation]
 Let $T:\dom(T) \subseteq V \to V$ be a closed right-linear operator. Let $B \subseteq \dom(T)$ be a right submodule. If  $s,q \in  \rho_{S,B}(T)$ with $s\notin[q]$, then
\begin{multline}\label{resEQ}
S_{R,B}^{-1}(s,T)S_{L,B}^{-1}(q,T)=\bigl\{\big(S_{R,B}^{-1}(s,T)-S_{L,B}^{-1}(q,T)\big)q
-\overline{s}\bigl(S_{R,B}^{-1}(s,T)-S_{L,B}^{-1}(q,T)\bigr)\\
+\ov{s}(\ov{s}-T)[T,Q_{s,B}[T]^{-1}]S^{-1}_{L,B}(q,T)-(\ov{s}-T)[T,Q_{s,B}[T]^{-1}]S^{-1}_{L,B}(q,T)q\big\}(q^2-2s_0q+|s|^2)^{-1}.
\end{multline}
\end{theorem}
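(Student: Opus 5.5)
We follow the classical proof of the $S$-resolvent equation (as in \cite{FJBOOK}), replacing the right $S$-resolvent equation by its modified version \eqref{RightSREQ} and keeping track of the extra commutator term. To shorten notation we write $C_s:=[T,Q_{s,B}[T]^{-1}]$, $R:=S_{R,B}^{-1}(s,T)$ and $L:=S_{L,B}^{-1}(q,T)$.

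\emph{Step 1 (left equation).} By \eqref{LeftSREQ} we have $Lq - TL = I$. Multiplying on the left by $R$ gives
\begin{equation*}
R = RLq - RTL.
\end{equation*}
The second term is legitimate: the range of $L$ is contained in $\dom(T)$, since $L = Q_{q,B}[T]^{-1}\overline{q} - TQ_{q,B}[T]^{-1}$, the operator $Q_{q,B}[T]^{-1}$ maps into $\dom(T^2)$, and $B$ and $\dom(T)$ are right submodules.

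\emph{Step 2 (right equation on the range of $L$).} Apply \eqref{RightSREQ} to $v = Lw$, which lies in $\dom(T)$. This yields
\begin{equation*}
RTL = sRL - L - (\overline{s}-T)C_sL .
\end{equation*}
Substituting into Step 1 gives
\begin{equation*}
R = RLq - sRL + L + (\overline{s}-T)C_sL ,
\end{equation*}
that is,
\begin{equation*}
sRL - RLq = L - R + (\overline{s}-T)C_sL .
\end{equation*}
This is the Sylvester-type equation $sX - Xq = Y$ for the unknown $X = RL$, with
\begin{equation*}
Y = L - R + (\overline{s}-T)C_sL .
\end{equation*}

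\emph{Step 3 (solving the Sylvester equation).} Multiply the equation on the left by $\overline{s}$ and subtract the same equation multiplied on the right by $q$:
\begin{equation*}
\overline{s}(sX - Xq) - (sX - Xq)q = |s|^2X - 2s_0Xq + Xq^2 = X\,(q^2 - 2s_0q + |s|^2).
\end{equation*}
Here we used $\overline{s}s = |s|^2$ and $s+\overline{s} = 2s_0$, both real, so they commute with the operators. Since $s\notin[q]$, the paravector $q^2 - 2s_0q + |s|^2$ is invertible. Therefore
\begin{equation*}
X = \bigl(\overline{s}Y - Yq\bigr)(q^2 - 2s_0q + |s|^2)^{-1}.
\end{equation*}

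\emph{Step 4 (matching the stated formula).} Expanding $\overline{s}Y - Yq$ produces $(R-L)q - \overline{s}(R-L)$ together with the two commutator terms $\overline{s}(\overline{s}-T)C_sL - (\overline{s}-T)C_sLq$, which is exactly \eqref{resEQ}.

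The main obstacle is operator-theoretic bookkeeping rather than algebra. First, Step 2 requires $L$ to map into $\dom(T)$, checked in Step 1. Second, $(\overline{s}-T)C_sL$ must be well defined: $C_sL$ has to land in $\dom(T)$. The term $TQ_{s,B}[T]^{-1}L$ is fine, since $Q_{s,B}[T]^{-1}$ maps into $\dom(T^2)$. The term $Q_{s,B}[T]^{-1}TL$ is fine as well, because it is the composition of $Q_{s,B}[T]^{-1}$ with a vector of $V$, so it lies in $\dom(T^2)$. Hence $C_sL$ maps into $\dom(T)$ and the expression makes sense, as it does in \eqref{RightSREQ}. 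Third, left multiplication by the paravectors $s,\overline{s}$ must be allowed to pass through $T$ in the expression $\overline{s}(\overline{s}-T)C_sL$. This is automatic, because the formula is stated with the scalar on the outside and we never commute $s$ past $T$.
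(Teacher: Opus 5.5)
Your proposal is correct and follows essentially the same route as the paper: you compose the left $S$-resolvent equation with $S_{R,B}^{-1}(s,T)$, then apply the modified right equation \eqref{RightSREQ} on the range of $S_{L,B}^{-1}(q,T)\subseteq\dom(T)$, which gives the paper's identity \eqref{SRSLsplit}. Solving $sX-Xq=Y$ by forming $\overline{s}(\cdot)-(\cdot)q$ is only a compact repackaging of the paper's two substitutions (with $w=qv$ and then $w=v$), which produce exactly $\overline{s}Y-Yq$.
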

\begin{proof}
As in the classic case, the $S$-resolvent equation is deduced from the left and the right $S$-resolvent equation.
We are going to show that, for every $v\in V$, one has
\begin{align}
\notag S_{R,B}^{-1}&(s,T)S_{L,B}^{-1}(q,T)(q^2-2s_0q+|s|^2)v=\bigl\{\big(S_{R,B}^{-1}(s,T)-S_{L,B}^{-1}(q,T)\big)q
-\overline{s}\bigl(S_{R,B}^{-1}(s,T)-S_{L,B}^{-1}(q,T)\bigr)\\ \label{RESeqStart}
&+\ov{s}(\ov{s}-T)[T,Q_{s,B}[T]^{-1}]S^{-1}_{L,B}(q,T)x-(\ov{s}-T)[T,Q_{s,B}[T]^{-1}]S^{-1}_{L,B}(q,T)q\big\}v.
\end{align}
We then obtain  \eqref{resEQ} by replacing $v$ by $(q^2-2s_0q+|s|^2)^{-1}v$. For $w\in V$, the left $S$-resolvent equation \eqref{LeftSREQ} implies
\[ S_{R,B}^{-1}(s,T)S_{L,B}^{-1}(q,T)qw = S_{R,B}^{-1}(s,T)TS_{L,B}^{-1}(q,T)w + S_{R,B}^{-1}(s,T)w. \]
Since $ Q_{s,B}[T]^{-1}$ maps $V$ onto $\dom_B(T^2)$, then the left $S$-resolvent operator $$S_{L,B}^{-1}(s,T) =  Q_{s,B}[T]^{-1}\overline{s} - T Q_{s,B}[T]^{-1}$$ maps $V$ to $\dom(T)$ and so $S_{L,B}^{-1}(q,T)w\in\dom(T)$. The right $S$-resolvent equation \eqref{RightSREQ} yields
\begin{align}
\notag
&S_{R,B}^{-1}(s,T)S_{L,B}^{-1}(q,T)qw \\\label{SRSLsplit}
= &sS_{R,B}^{-1}(s,T)S_{L,B}^{-1}(q,T)w - S_{L,B}^{-1}(q,T)w -(\ov{s}-T)[T,Q_{s,B}[T]^{-1}]S^{-1}_{L,B}(q,T)w+ S_{R,B}^{-1}(s,T)w.
\end{align}
If we apply this identity with $w = qv$ we get
\begin{align*}
&S_{R,B}^{-1}(s,T)S_{L,B}^{-1}(q,T)(q^2-2s_0q+|s|^2)v
\\
&=S_{R,B}^{-1}(s,T)S_{L,B}^{-1}(q,T)q^2v -2s_0 S_{R,B}^{-1}(s,T)S_{L,B}^{-1}(q,T)qv+|s|^2 S_{R,B}^{-1}(s,T)S_{L,B}^{-1}(q,T) v
\\
&=sS_{R,B}^{-1}(s,T)S_{L,B}^{-1}(q,T)qv - S_{L,B}^{-1}(q,T)qv -(\ov{s}-T)[T,Q_{s,B}[T]^{-1}]S^{-1}_{L,B}(q,T)qv+ S_{R,B}^{-1}(s,T)qv
\\
 &-2s_0 S_{R,B}^{-1}(s,T)S_{L,B}^{-1}(q,T)qv + |s|^2 S_{R,B}^{-1}(s,T)S_{L,B}^{-1}(q,T) v.
\end{align*}
Applying identity \eqref{SRSLsplit} again with $w = v$ gives

\begin{align*}
&S_{R,B}^{-1}(s,T)S_{L,B}^{-1}(q,T)(q^2-2s_0q+|s|^2)v
\\
=&s^2S_{R,B}^{-1}(s,T)S_{L,B}^{-1}(q,T)v - sS_{L,B}^{-1}(q,T)v -s(\ov{s}-T)[T,Q_{s,B}[T]^{-1}]S^{-1}_{L,B}(q,T)v + sS_{R,B}^{-1}(s,T) v
\\
&- S_{L,B}^{-1}(q,T)qv -(\ov{s}-T)[T,Q_{s,B}[T]^{-1}]S^{-1}_{L,B}(q,T)qv+ S_{R,B}^{-1}(s,T)qv
\\
&
-2s_0 sS_{R,B}^{-1}(s,T)S_{L,B}^{-1}(q,T)v +2s_0 S_{L,B}^{-1}(q,T)v+2s_0(\ov{s}-T)[T,Q_{s,B}[T]^{-1}]S^{-1}_{L,B}(q,T)v
\\
& -2s_0S_{R,B}^{-1}(s,T)v+ |s|^2 S_{R,B}^{-1}(s,T)S_{L,B}^{-1}(q,T) v
\\
=&(s^2-2s_0s + |s|^2)S_{R,B}^{-1}(s,T)S_{L,B}^{-1}(q,T)v- (2s_0 -s)[S_{R,B}^{-1}(s,T)-S_{L,B}^{-1}(q,T)]
\\
&+[S_{R,B}^{-1}(s,T)- S_{L,B}^{-1}(q,T)]qv
\\
&+(2s_0-s)(\ov{s}-T)[T,Q_{s,B}[T]^{-1}]S^{-1}_{L,B}(q,T)v
-(\ov{s}-T)[T,Q_{s,B}[T]^{-1}]S^{-1}_{L,B}(q,T)qv.
\end{align*}
The identity $2s_0 = s + \overline{s}$ implies $s^2 - 2s_0s+ |s|^2 = 0$ and $2s_0 - s = \overline{s}$ and hence we obtain the desired equation \eqref{RESeqStart}.

\end{proof}

\section{Concluding remarks}
In contrast to complex analysis, the noncommutative setting of Clifford analysis admits multiple definitions of hyperholomorphicity. Consequently, the associated Cauchy formulas serve as the foundation for distinct spectral theories, each characterized by specific Cauchy kernels. Notably, the spectral theory based on the $S$-spectrum is intrinsically linked to slice hyperholomorphicity and draws its inspiration from the quaternionic formulation of quantum mechanics \cite{BF}.
Extending complex spectral theory to vector operators
has broad applications across several fields such as:

\textit{Quaternionic Quantum Mechanics.} Birkhoff and von Neumann \cite{BF} showed that the Schrödinger equation can be formulated in quaternionic settings, motivating the study of spectral theory for quaternionic operators \cite{adler}.

\textit{Vector Analysis.} The gradient operator with nonconstant coefficients, which models physical laws like heat propagation and mass diffusion, is expressed as $T=\sum_{i=1}^ne_ia_i(x)\partial_{x_i}$ with specific boundary conditions \cite{CMS24}.
In order to generate the fractional powers of these operators a powerful method is the
 $H^\infty$-functional calculus, see the books \cite{Haase,HYTONBOOK1,HYTONBOOK2} for the complex version.
The $H^\infty$-functional calculus for bi-sectorial Clifford operators was investigated in \cite{QUADRATIC,MS24} and this allows the definition of the fractional Fourier law.

\textit{Differential Geometry.} The Dirac operator on a Riemannian manifold $(M,g)$ generalizes the gradient using covariant derivatives, expressed as $\mathcal{D}=\sum_{i=1}^ne_i\nabla_{E_i}^\tau$, see \cite{DiracHarm}. Specific realizations include the Dirac operator in hyperbolic and spherical spaces \cite{DIRACHYPSPHE}.

\textit{Hypercomplex Analysis.} The standard Dirac operator $D$ and its conjugate $\overline{D}$ are central to this field \cite{DSS,DiracHarm}. Slice hyperholomorphic functions are characterized as the kernel of a global operator $G$ \cite{6Global}. Furthermore, the Dirac fine structures on the $S$-spectrum are defined by operators like $T_{\alpha,m}=D^\alpha(D\overline{D})^m$, leading to specific function spaces and functional calculi \cite{polypolyFS,CDPS1,Fivedim,Polyf1,Polyf2}, including $H^\infty$-versions \cite{MILANJPETER,MPS23}.

These fine structures connect slice hyperholomorphic and axially monogenic functions via the Fueter-Sce extension theorem, using powers of the Laplacian \cite{Fueter,TaoQian1,Sce,ColSabStrupSce}.

Distinct from the $S$-spectrum approach, the monogenic spectral theory, initiated by Jefferies, McIntosh, and Picton-Warlow \cite{JM}, is based on monogenic functions, which are functions in the kernel of the Dirac operator, and their Cauchy formula. This theory and its associated  with the monogenic spectrum and the monogenic functional calculus admits the $H^\infty$-version extensively covered in \cite{JBOOK,TAOBOOK}.

\section*{Declarations and statements}

\textbf{Data availability}. There are no data associated with the research in this paper.\\

\textbf{Conflict of interest}.  The author declares that he has no competing interests regarding the publication of this paper.\\ 

\textbf{Research funding.} This research did not receive funding.


\begin{thebibliography}{99}





\bibitem{ACS2016} D. Alpay, F. Colombo, I. Sabadini: {\it Slice hyperholomorphic Schur analysis}. Volume 256 of Operator Theory: Advances and Applications. Basel, Birkhäuser/Springer (2016).

 \bibitem{AlpayColSab2020} D. Alpay, F. Colombo, I. Sabadini: {\it Quaternionic de Branges spaces and characteristic operator function}. SpringerBriefs in Mathematics, Springer, Cham (2020).

\bibitem{adler} S. Adler: \textit{Quaternionic Quantum Mechanics and Quaternionic Quantum Fields}. Volume 88 of \textit{International Series of Monographs on Physics}. Oxford University Press, New York (1995).

\bibitem{DIRACHYPSPHE} I. Beschastnyi, F. Colombo, S. A. Lucas, I. Sabadini: \textit{The S-resolvent estimates for the Dirac operator on hyperbolic and spherical spaces}. arXiv:2504.12725.

\bibitem{BF} G. Birkhoff, J. von Neumann: \textit{The logic of quantum mechanics}. Ann. of Math. (2) \textbf{37}(4) (1936) 823--843.

 \bibitem{FJBOOK} F. Colombo, J. Gantner: \textit{Quaternionic closed operators, fractional powers and fractional diffusion processes}. Operator Theory: Advances and Applications \textbf{274}. Birkhäuser/Springer, Cham (2019).

     \bibitem{CGK} F. Colombo, J. Gantner, D.P. Kimsey: \textit{Spectral theory on the $S$-spectrum for quaternionic operators}. Volume 270 of Operator Theory: Advances and Applications, Birkhäuser/Springer, Cham (2018) ix+356.

\bibitem{polypolyFS} F. Colombo, A. De Martino, S. Pinton: \textit{Functions and operators of the polyharmonic and polyanalytic Clifford fine structures on the $S$-spectrum.} arXiv:2501.14716

\bibitem{CDPS1} F. Colombo, A. De Martino, S. Pinton, I. Sabadini: \textit{Axially harmonic functions and the harmonic functional calculus on the $S$-spectrum}, J. Geom. Anal. \textbf{33}(1) (2023) 54 pp.

\bibitem{Fivedim} F. Colombo, A. De Martino, S. Pinton, I. Sabadini: \textit{The fine structure of the spectral theory on the $S$-spectrum in dimension five}. J. Geom. Anal. \textbf{33} (9) (2023) 73 pp.

\bibitem{6Global} F. Colombo, O.J. Gonzalez-Cervantes, I. Sabadini: \textit{A nonconstant coefficients differential operator associated to slice monogenic functions}. Trans. Amer. Math. Soc., \textbf{365} (2013), no. 1, 303--318.

 \bibitem{CMS24} F. Colombo, F. Mantovani, P. Schlosser:
{\em Spectral properties of the gradient operator with nonconstant coefficients.}
 Anal. Math. Phys. 14 , no. 5, Paper No. 108, 31 p. (2024).
 
\bibitem{QUADRATIC} F. Colombo, F. Mantovani, P. Schlosser: 
{\em Quadratic estimates for the $H^\infty$-functional calculus of
bisectorial Clifford operators.}
The Journal of Geometric Analysis,
https://doi.org/10.1007/s12220-025-02282-z



\bibitem{MILANJPETER} F. Colombo, S. Pinton, P. Schlosser: {The $H^\infty$-functional calculi for the quaternionic fine structures of Dirac type}. Milan J. Math. \textbf{92}(1) (2024) 73--122.

\bibitem{ColSabStrupSce} F. Colombo, I. Sabadini, D.C. Struppa: \textit{Michele Sce's works in hypercomplex analysis---a translation with commentaries}. Birkhäuser/Springer, Cham (2020).

    \bibitem{ColomboSabadiniStruppa2011} F. Colombo, I. Sabadini, D.C. Struppa: {\it Noncommutative functional calculus}. Volume 289 of Progress in Mathematics. Birkhäuser/Springer Basel AG, Basel, Theory and applications of slice hyperholomorphic functions (2011).


\bibitem{DSS} R. Delanghe, F. Sommen, V. Sou\v cek: \textit{Clifford algebra and spinor-valued functions.} Mathematics and its Applications \textbf{53}, Kluwer Academic Publishers Group, Dordrecht (1992).

\bibitem{Polyf1} A. De Martino, S.~Pinton: {A polyanalytic functional calculus of order 2 on the $S$-spectrum}. Proc. Amer. Math. Soc. \textbf{151}(6) (2023) 2471--2488.

\bibitem{Polyf2} A. De Martino, S. Pinton: {Properties of a polyanalytic functional calculus on the $S$-spectrum}. Math. Nachr. \textbf{296}(11) (2023) 5190--5226.

\bibitem{MPS23} A. De Martino, S. Pinton, P. Schlosser: {The harmonic $H^\infty$-functional calculus based on the $S$-spectrum}. J. Spectr. Th. \textbf{14}(1) (2024) 121--162.

\bibitem{Fueter} R. Fueter: \textit{Die Funktionentheorie der Differentialgleichungen $\Delta u=0$ und $\Delta\Delta u=0$ mit vier reellen Variablen}. Comment. Math. Helv. \textbf{7}(1) (1934) 307--330.

\bibitem{Haase} M. Haase: \textit{The functional calculus for sectorial operators}. Operator Theory: Advances and Applications \textbf{169}, Birkhäuser Verlag, Basel (2006).

\bibitem{HYTONBOOK1} T. Hytönen, J. van Neerven, M. Veraar, L. Weis: \textit{Analysis in Banach spaces. Vol. I. Martingales and Littlewood-Paley theory}. Ergebnisse der Mathematik und ihrer Grenzgebiete, 3. Folge. A Series of Modern Surveys in Mathematics \textbf{63}. Springer, Cham (2016).

\bibitem{HYTONBOOK2} T. Hytönen, J. van Neerven, M. Veraar, L. Weis: \textit{Analysis in Banach spaces. Vol. II. Probabilistic methods and operator theory}. Ergebnisse der Mathematik und ihrer Grenzgebiete, 3. Folge. A Series of Modern Surveys in Mathematics \textbf{67}. Springer, Cham (2017).

\bibitem{DiracHarm} J.E. Gilbert, M.A.M. Murray: \textit{Clifford algebras and Dirac operators in harmonic analysis}. Cambridge Studies in Advanced Mathematics \textbf{26}. Cambridge University Press, Cambridge (1991) viii+334 p.

\bibitem{JBOOK} B. Jefferies: \textit{Spectral properties of noncommuting operators}. Lecture Notes in Mathematics \textbf{1843}, Springer-Verlag, Berlin (2004).

\bibitem{JM} B. Jefferies, A. McIntosh, J. Picton-Warlow: \textit{The monogenic functional calculus}. Studia Mathematica \textbf{136}(2) (1999) 99-119.


\bibitem{MS24} F. Mantovani, P. Schlosser: \textit{The $H^\infty$-functional calculus for bisectorial Clifford operators}. J. Spectr. Theory \textbf{15}, No. 2, 751-818 (2025).

\bibitem{TAOBOOK} T. Qian, P. Li: \textit{Singular integrals and Fourier theory on Lipschitz boundaries}. Science Press Beijing, Beijing, Springer, Singapore (2019).

\bibitem{TaoQian1} T. Qian: \textit{Generalization of Fueter's result to $\mathbb{R}^{n+1}$}. Atti Accad. Naz. Lincei. Cl. Sci. Fis. Mat. Natur. Rend. Lincei \textbf{8}(2) (1997) 111--117.

\bibitem{Sce} M. Sce: \textit{Osservazioni sulle serie di potenze nei moduli quadratici}. Atti Accad. Naz. Lincei Rend. Cl. Sci. Fis. Mat. Natur. \textbf{23}(8) (1957) 220--225.
\end{thebibliography}
\end{document}